\newtheoremstyle{break}
  {}%
  {}%
  {\itshape}
  {}%
  {\bfseries}
  {.}%
  {\newline}%
  {}%
\theoremstyle{break}
\newtheorem{thm}{Theorem}[section]
\newtheorem{crl}[thm]{Corollary}
\newtheorem{lmm}[thm]{Lemma}
\newtheorem{prp}[thm]{Proposition}
\newtheorem{dfn}[thm]{Definition}
\newtheorem{exa}[thm]{Example}
\newtheorem{rem}{Remark}
\newcommand{\al}{\alpha}
\newcommand{\be}{\beta}
\newcommand{\ga}{\gamma}
\DeclareMathOperator{\Homo}{H}
\DeclareMathOperator{\Ker}{Ker}
\DeclareMathOperator{\End}{End}
\DeclareMathOperator{\codim}{codim}
\DeclareMathOperator{\rank}{rank}
\title{On the rapid decay homology of F.Pham}
\author{Saiei-Jaeyeong Matsubara-Heo\footnote{Graduate School of Mathematical Sciences, The University of Tokyo, 3-8-1 Komaba, Meguro, Tokyo, 153-8914 Japan.\newline e-mail: \texttt{saiei@ms.u-tokyo.ac.jp}}}
\begin{document}
\setlength{\mathindent}{0pt}
\noindent
\maketitle

\begin{abstract}

In \cite{hien}, M. Hien introduced rapid decay homology group $\Homo^{rd}_{*}(U, (\nabla, E))$ associated to an irregular connection $(\nabla, E)$ on a smooth complex affine variety $U$, and showed that it is the dual group of the algebraic de Rham cohomology group $\Homo^*_{dR}(U,(\nabla^{\vee}, E^{\vee}))$. On the other hand, F. Pham has already introduced his version of rapid decay homology when $(\nabla, E)$ is the so-called elementary irregular connection (\cite{Sab}) in \cite{Pham}. In this report, we will state a comparison theorem of these homology groups and give an outline of its proof. This can be regarded as a homological counterpart of the result \cite{Sab} of C. Sabbah. As an application, we construct a basis of some rapid decay homologies associated to a hyperplane arrangement and hypersphere arrangement of Schl\"ofli type. 
\end{abstract}

\section{Motivation}
Gauss hypergeometric function is presumably the most important and well-studied special function. Amongst various properties of Gauss hypergeometric function, the one which enables us to analyse its global behaviour is that it has an integral representation:
$${}_2F_1(\alpha, \beta, \gamma ; x)=\frac{\Gamma(\gamma)}{\Gamma(\gamma -\alpha)\Gamma(\alpha)}\int^1_0t^{\alpha -1}(1-t)^{\gamma -\al -1}(1-xt)^{-\be}dt\;\; (|x|<1).$$

Here, parameters $\al,\be,\ga\in\mathbb{C}$ must satisfy $0<\Re (\al)<\Re (\ga)$ so that the integral is convergent. However, such a restriction can be relaxed by considering the so-called ``regularization'' of paths $[0,1]$ (\cite{AK}), so the essential assumption is only   
$\al, \ga-\al\notin\mathbb{Z}_{\leq 0}$. 

Let us put 

$z_1={}_2F_1(\alpha, \beta, \gamma ; x),$ $z_2=\frac{\Gamma(\gamma)}{\Gamma(\gamma -\alpha)\Gamma(\alpha)}\int^1_0t^{\alpha -1}(1-t)^{\gamma -\al}(1-xt)^{-\be}dt$, $Z={}^t(z_1,z_2).$ 

Then, it can easily be confirmed that $Z$ satisfies the differential equation
\begin{equation}
\frac{d}{dx}Z=
\begin{pmatrix}
\frac{\ga-\al-\be}{x-1} &\frac{\be-\ga}{x-1}\\
\frac{\ga-\al}{x}          &\frac{-\ga}{x}
\end{pmatrix}
Z.
\end{equation}

This suggests that there is another linearly independent solution $\tilde{Z}={}^t(\tilde{z}_1,\tilde{z}_2)$ of (0.1), which is given by the following expression when $\Re(\be)<1$ and $\al, \be, \ga-\al\notin\mathbb{Z}$:

$\tilde{z}_1=\int^{\frac{1}{x}}_0t^{\alpha -1}(1-t)^{\gamma -\al-1}(1-xt)^{-\be}dt,$

$\tilde{z}_2=\int^{\frac{1}{x}}_0t^{\alpha -1}(1-t)^{\gamma -\al}(1-xt)^{-\be}dt.$

Note that the integrands of $\tilde{z}_1$ and $\tilde{z}_2$ are exactly those of $z_1$ and $z_2$. This fact can be recaptured from the view point of the period pairing. In order to formulate the pairing, we prepare several notations. We put

$S=\{ 0,1,\infty, \frac{1}{x}\}$, $\Phi =t^\al(1-t)^{\ga-\al}(1-xt)^{-\be}$,  $U_x=\mathbb{P}^1\setminus S$,

$\nabla =\Phi^{-1}d_t\Phi =d_t+d_tlog\Phi\wedge=d_t+(\al\frac{dt}{t}-(\ga-\al)\frac{dt}{1-t}+\be x\frac{dt}{1-xt})\wedge$.

We equip $U_x$ with Zariski topology and we denote by $\mathcal{O}_{U_x}$ its structure sheaf. Then, $\nabla$ naturally defines a flat algebraic connection $\nabla: \mathcal{O}_{U_x}\rightarrow \Omega_{U_x}^1$. 
We finally put 
$\Homo^1_{dR}(U_x, (\mathcal{O}_{U_x}, \nabla))=\Homo(\Gamma(U_x, \mathcal{O}_{U_x})\overset{\nabla}{\rightarrow}\Gamma(U, \Omega^1_{U_x})\rightarrow 0)$. The following theorem is a simple application of the famous comparison theorem of Deligne and Gr\"othendiek (\cite{Del}).

\begin{thm}\label{thm:DG}
Let $\mathcal{L}$ be the local system of flat sections of $\nabla^{an}$.

There is a perfect pairing given by integration

$$
\begin{array}{cccccc}
\int:&\Homo_1(U_x^{an}, \mathcal{L}^{\vee}) &\times &\Homo^1_{dR}(U_x, (\mathcal{O}_{U_x}, \nabla)) &\longrightarrow &\mathbb{C}\\
 & & \rotatebox{90}{$\in$}& & &\rotatebox{90}{$\in$} \\
 &([\Gamma\otimes\Phi] \hspace{-1cm}&, &\hspace{-2.3cm}[\omega]) &\mapsto &\int_{\Gamma}\Phi\omega
\end{array}
$$
\end{thm}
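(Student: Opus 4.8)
The plan is to reduce the asserted integration pairing to the canonical Kronecker (evaluation) pairing between twisted singular homology and cohomology, and to obtain perfectness by stringing together a chain of standard isomorphisms. First I would observe that $\nabla$ is a connection with \emph{regular} singularities along $S$: since $\Phi$ is a product of powers of the linear forms $t,\,1-t,\,1-xt$ (with a compensating exponent at $\infty$), the one-form $d_t\log\Phi=\al\frac{dt}{t}-(\ga-\al)\frac{dt}{1-t}+\be x\frac{dt}{1-xt}$ has only first-order poles, so $(\mathcal{O}_{U_x},\nabla)$ is a regular singular connection on the smooth affine curve $U_x$ and the hypotheses of the Deligne--Gr\"othendieck comparison theorem are satisfied.

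Second, I would invoke that comparison theorem to identify the algebraic de Rham cohomology with its analytic counterpart,
$$\Homo^1_{dR}(U_x,(\mathcal{O}_{U_x},\nabla))\;\xrightarrow{\ \sim\ }\;\Homo^1_{dR}(U_x^{an},(\mathcal{O}_{U_x^{an}},\nabla^{an})),$$
and then apply the holomorphic Poincar\'e lemma: the analytic de Rham complex $(\Omega^{\bullet}_{U_x^{an}},\nabla^{an})$ is a resolution of the local system $\mathcal{L}=\Ker\nabla^{an}$ of flat sections, so that $\Homo^1_{dR}(U_x^{an},(\mathcal{O}_{U_x^{an}},\nabla^{an}))\cong\Homo^1(U_x^{an},\mathcal{L})$. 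Since $U_x$ is a curve the complex has length one and every global one-form is automatically $\nabla$-closed, so this group is exactly the cokernel of $\nabla$ that defines $\Homo^1_{dR}$.

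Third, I would use that $U_x^{an}$, a four-times punctured sphere, is homotopy equivalent to a finite CW complex (indeed to a wedge of circles). Consequently, over the field $\mathbb{C}$ the universal coefficient theorem for local coefficients yields a perfect Kronecker pairing
$$\Homo_1(U_x^{an},\mathcal{L}^{\vee})\times\Homo^1(U_x^{an},\mathcal{L})\longrightarrow\mathbb{C}$$
induced by the evaluation $\mathcal{L}^{\vee}\otimes\mathcal{L}\to\mathbb{C}$. Composing with the isomorphisms of the previous step already provides the nondegeneracy claimed in the theorem, since both factors are finite-dimensional mutual duals.

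Finally, the genuinely delicate point is to verify that, under this chain of identifications, the abstract Kronecker pairing is computed by $\int_{\Gamma}\Phi\omega$. A class in $\Homo_1(U_x^{an},\mathcal{L}^{\vee})$ is represented by a loaded chain $\Gamma\otimes\Phi$, where the chosen branch of $\Phi=t^{\al}(1-t)^{\ga-\al}(1-xt)^{-\be}$ is a flat section of $\mathcal{L}^{\vee}=\Ker\nabla^{\vee}$; for a de Rham representative $\omega\in\Gamma(U_x,\Omega^1_{U_x})$ one computes $d(\Phi\omega)=\Phi\,\nabla\omega=0$, so $\Phi\omega$ is a single-valued closed one-form along $\Gamma$, and the twisted de Rham theorem identifies its period $\int_{\Gamma}\Phi\omega$ with the evaluation pairing. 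I expect this last step---tracing the explicit twist by $\Phi$ through the Poincar\'e-lemma quasi-isomorphism---to be the main obstacle, as every preceding step is essentially formal; note that the convergence issues motivating the ``regularization'' of $[0,1]$ do not intervene in the abstract statement, because $\Homo_1(U_x^{an},\mathcal{L}^{\vee})$ is generated by compact cycles on which $\Phi\omega$ is bounded.
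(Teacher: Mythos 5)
Your proposal is correct and follows essentially the same route as the paper: the paper offers no detailed argument, saying only that the theorem is ``a simple application of the famous comparison theorem of Deligne and Gr\"othendieck,'' and your chain (regularity of $\nabla$, Deligne's algebraic--analytic comparison, the holomorphic Poincar\'e lemma identifying analytic de Rham cohomology with $\Homo^1(U_x^{an},\mathcal{L})$, the perfect Kronecker pairing over $\mathbb{C}$, and the verification that the identification is realized by $\int_\Gamma \Phi\omega$ using $d(\Phi\omega)=\Phi\nabla\omega=0$) is precisely the standard unpacking of that citation. The details you supply, including the observation that regularization is irrelevant to the abstract statement, are accurate.
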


In the theorem above, $an$ stands for the analytification as usual. Now, since 
$$\Bigl[\frac{dt}{t(1-t)}\Bigr],\; \Bigl[\frac{dt}{t}\Bigr]\in\Homo^1_{dR}(U_x, (\mathcal{O}_{U_x}, \nabla)), \text{ and } [0,1],\;[0,\frac{1}{x}]\in\Homo_1^{lf}(U_x^{an},\mathcal{L}^{\vee})\simeq\Homo_1(U_x^{an},\mathcal{L}^{\vee}),$$ we can verify that $Z$ and $\tilde{Z}$ are periods of this pairing.  Thanks to this description, K. Aomoto and I. M. Gelfand succeeded in introducing a generalization of Gauss hypergeometric function as well as its integral representation and its integration cycles.

On the other hand, it is natural to ask whether we can obtain a similar description of Kummer hypergeometric function. The differential equation for Kummer hypergeometric function can be obtained by the process of confluence. Namely, introducing a small parameter $\epsilon\in\mathbb{C}$, putting $-\epsilon\tilde{x}=x$, $\be=\frac{1}{\epsilon}$, letting $\epsilon\rightarrow +0$, and again putting $\tilde{x}=x$, we obtain a differential equation
\begin{equation}
\frac{d}{dx}Y=
\begin{pmatrix}
-1 & 1\\
\frac{\ga-\al}{x}          &\frac{-\ga}{x}
\end{pmatrix}
Y.
\end{equation}

as well as its solution basis

$Y={}^t(\int^{1}_0t^{\alpha -1}(1-t)^{\gamma -\al-1}e^{-xt}dt,\; \int^{1}_0t^{\alpha -1}(1-t)^{\gamma -\al}e^{-xt}dt)$

$\tilde{Y}={}^t(\int^{\infty}_0t^{\alpha -1}(1-t)^{\gamma -\al-1}e^{-xt}dt,\; \int^{\infty}_0t^{\alpha -1}(1-t)^{\gamma -\al}e^{-xt}dt).$

(Note that the signature of the variable $x$ of the usual Kummer hypergeometric function is different from the one of this paper).

However, we must notice that the direction to the infinity can never be arbitrary in the integration above. This observation was made quite explicit by Bloch-Esnault in the 1-dimensional case and by M. Hien in multidimensional cases.  In order to explain their idea, we put 

$S^\prime=\{ 0,1,\infty\}$, $\Psi =t^\al(1-t)^{\ga-\al}$,  $U=\mathbb{P}^1\setminus S^\prime$,

$\nabla^\prime =e^{xt}\Psi^{-1}d_t\Psi e^{-xt} =d_t+d_tlog(\Psi e^{-xt})\wedge=d_t+(\al\frac{dt}{t}-(\ga-\al)\frac{dt}{1-t}-x dt)\wedge$.

Using similar notations as Theorem \ref{thm:DG}, we can now state a corollary of the result of Bloch-Esnault-Hien for the case of Kummer hyoergeometric function.

\begin{thm}
There is a perfect pairing given by integration
$$
\begin{array}{cccccc}
\int:&\Homo_1^{rd}(U^{an}, \mathcal{L}^{\vee}) &\times &\Homo^1_{dR}(U, (\mathcal{O}_{U}, \nabla^\prime)) &\longrightarrow &\mathbb{C}\\
 & & \rotatebox{90}{$\in$}& & &\rotatebox{90}{$\in$} \\
 &([\Gamma\otimes\Psi] \hspace{-1cm}&, &\hspace{-2.3cm}[\omega]) &\mapsto &\int_{\Gamma}\Psi e^{-xt}\omega
\end{array}
$$

\end{thm}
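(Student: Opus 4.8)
The plan is to obtain the statement as the rank-one, one-variable specialization of M. Hien's duality theorem \cite{hien} (recalled in the abstract): for an irregular connection $(\nabla, E)$ on a smooth complex affine variety $U$ the rapid decay homology $\Homo^{rd}_*(U, (\nabla, E))$ is naturally dual to the algebraic de Rham cohomology $\Homo^*_{dR}(U, (\nabla^\vee, E^\vee))$ under the period pairing. First I would fix the data. Here $U = \mathbb{P}^1\setminus\{0,1,\infty\}$ is a smooth affine curve, in fact $U = \operatorname{Spec}\mathbb{C}[t, t^{-1}, (1-t)^{-1}]$, and $E = \mathcal{O}_U$ carries the algebraic connection $\nabla'$. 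Writing $\nabla' = d_t + \eta\wedge$ with $\eta = d_t\log(\Psi e^{-xt})$, the dual connection is $(\nabla')^{\vee} = d_t - \eta\wedge$, whose (multivalued) horizontal section is $\Psi e^{-xt}$; the local system generated by these sections is exactly $\mathcal{L}^\vee$, where $\mathcal{L}$ denotes the local system of flat sections of $(\nabla')^{an}$.

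With this dictionary I would apply Hien's theorem to $(\nabla, E) = ((\nabla')^{\vee}, \mathcal{O}_U)$, so that $(\nabla^\vee, E^\vee) = (\nabla', \mathcal{O}_U)$; it then yields a perfect pairing between $\Homo_1^{rd}(U^{an}, \mathcal{L}^\vee)$ and $\Homo^1_{dR}(U, (\mathcal{O}_U, \nabla'))$. Two verifications remain. First, $\nabla'$ must be a genuine algebraic meromorphic connection on $(\mathbb{P}^1, \{0,1,\infty\})$: it is regular singular at $t = 0, 1$, while in the chart $s = 1/t$ near $\infty$ the term $-x\,dt = x s^{-2}\,ds$ contributes a double pole, so $\infty$ is an irregular singularity of irregularity $1$. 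Second, Hien's abstract period pairing must be realized by the naive integral. For this I would unwind the pairing on a rapid decay chain $\Gamma\otimes\Psi$ --- a chain $\Gamma$ tensored with the horizontal section $\Psi e^{-xt}$ --- against a de Rham representative $\omega$: by construction it equals $\int_{\Gamma}(\Psi e^{-xt})\,\omega$, the displayed map. Its well-definedness is the usual Stokes argument: $\nabla'$-exactness of $\omega$ and $(\nabla')^{\vee}$-flatness of $\Psi e^{-xt}$ make $(\Psi e^{-xt})\,\omega$ a closed $1$-form whose integral depends only on the classes of $\Gamma\otimes\Psi$ and $\omega$.

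I expect the genuine content to sit at the irregular point $\infty$, and this is where I would locate the main obstacle. Over the regular singular locus $\{0,1\}$ the comparison is the Deligne--Gr\"othendieck argument already invoked in Theorem \ref{thm:DG}; the new feature is that a chain reaching $\infty$ is admissible only along directions in which $\Psi e^{-xt}$ decays rapidly, i.e. $\Re(xt)\to+\infty$. This is precisely the Bloch--Esnault--Hien condition and explains why the ray of integration defining $\tilde Y$ cannot point in an arbitrary direction. Proving that the rapid decay chains near $\infty$ furnish exactly the right space for nondegeneracy is the heart of Hien's theorem and the step I would treat most carefully. As a consistency check I would compute the dimension of the de Rham side by the Euler--Poincar\'e formula for connections on curves, $\chi_{dR} = \rank(E)\cdot\chi(U) - \sum_{p}\operatorname{irr}_p = 1\cdot(2-3) - 1 = -2$; since $\Homo^0_{dR} = 0$ (the flat section $(\Psi e^{-xt})^{-1}$ is not algebraic), this gives $\dim_{\mathbb{C}}\Homo^1_{dR}(U, (\mathcal{O}_U, \nabla')) = 2$, matching the two independent solution vectors $Y$ and $\tilde Y$ and hence the expected rank of $\Homo_1^{rd}(U^{an}, \mathcal{L}^\vee)$.
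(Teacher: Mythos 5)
Your proposal is correct and follows the same route as the paper: the paper offers no separate proof of this statement, presenting it exactly as you do --- as the rank-one specialization of the Bloch--Esnault--Hien duality theorem (restated later as the paper's Theorem 1.3) applied to the connection dual to $\nabla'$ on $U=\mathbb{P}^1\setminus\{0,1,\infty\}$, with the local system $\mathcal{L}^\vee$ of flat sections of $((\nabla')^\vee)^{an}$ generated by $\Psi e^{-xt}$. Your verifications (irregularity $1$ at $\infty$, the Stokes-type well-definedness of the integral, and the Euler--Poincar\'e dimension count giving $\dim\Homo^1_{dR}=2$) are accurate and only add detail beyond what the paper records.
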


Here, $\Homo_1^{rd}(U^{an}, \mathcal{L}^{\vee})$ is their invention, the rapid decay homology group, which we shall discuss in this paper. Roughly speaking, the rapid decay homology group is an abelian group of locally finite chains whose direction toward the infinity is the rapid decay direction of the integrand (in this case, it is the rapid decay direction of $e^{-xt}$).

It is now natural to ask how to construct the basis of rapid decay homology group. One way is to employ Morse theory as in \cite{ET}. This method is better suited to the computation of asymptotic expansions at infinity. However, it relies on the existence of critical points, and it is in general untrue that the phase function has as many Morse critical points as the rank of the rapid decay homology group when there appear bounded chambers.

Therefore, in this paper, we focus on a rather older work \cite{Pham} where F. Pham defined his own version of rapid decay homology. The important aspect of Pham's approach is that it enables us to compute the rapid decay homology group in terms of a certain relative homology associated to the phase function. We would like to show that the rapid decay homology group of F.Pham is isomorphic to the one of Bloch-Esnault-Hien and apply it to computations of rapid decay homology groups associated to some elementary irregular connections which comes from hyperplane arrangements.

\section{Rapid decay homology theories}

In this short section, we review the rapid decay homology theory of Bloch-Esnault-Hien and that of F. Pham. Since we are interested in the case of elementary irregular connection, we rewrite the definition of the rapid decay homology group of Bloch-Esnault-Hien (see \cite{hienroucairol}). Throughout this section, we let $U$ denote a complex quasi-projective variety over $\mathbb{C}$.
We take a smooth projective compactification $X$ of $U$ such that $D=X\setminus U$ is a normal crossing divisor.

 We define the oriented blow-up along $D$ following \cite{Sabook}: denoting by $L(D)$ the line bundle associated to $\mathcal{O}_X(D)$, we take a local section $s:X^{an}\rightarrow L(D)$ so that the equation $\{s=0\}=D$ holds locally. This local section naturally induces a section $s:X^{an}\setminus D\rightarrow S^1(D)$ where $S^1(D)=(L(D)\setminus X^{an})/\mathbb{R}_{>0}$. Then, we define the real oriented blow-up $\widetilde{X(D)}$ along $D$ by the formula 
$$\widetilde{X(D)}=(\text{closure of the image of } X^{an}\setminus D\text{ in } S^1(D)).$$
One can easily check that $\widetilde{X(D)}$ does not depend on the choice of $s$, so that we can patch them up to obtain a globally defined topological space $\widetilde{X(D)}\subset S^1(D).$ By construction, there is a projection map $\pi:\widetilde{X(D)}\rightarrow X^{an}$ which is compatible with the projection of a $S^1$ bundle $\pi:S^1(D)\rightarrow X^{an}$. It can also been confirmed that $\widetilde{X(D)}$ has a structure of a differentiable manifold with corners and $\pi$ is a morphism of manifolds with corners in the sense of \cite{Joyce}. Note also that $\widetilde{X(D)}$ can be embedded into a real Euclidian space as a semi-analytic subset. We write $\tilde{X}$ for $\widetilde{X(D)}$ in order to simplify the notation.

Let us describe the morphism $\pi$. Outside the divisor $D$, $\pi$ defines a biholomorphic map $\pi: \tilde{X}\setminus \pi^{-1}(D)\tilde{\rightarrow} X^{an}\setminus D=U^{an}$. On the other hand, locally at $p\in D$, $\pi$ behaves as a polar coordinate, i. e. if $x=(x_1,\cdots ,x_n)$ is a local coordinate such that $D=\{x_1\cdots x_k=0\}$, $\pi$ reads as 
$$
\begin{array}{cccccc}
\pi:&\left([0,\varepsilon )\times S^1\right)^k &\times &\mathbb{C}^{n-k} &\longrightarrow &X^{an}\\
 & & \rotatebox{90}{$\in$}& & &\rotatebox{90}{$\in$} \\
 &((r_i,e^{\sqrt{-1}\theta_i})_{i=1}^k&, & x_{k+1},\cdots ,x_n)&\mapsto &  (r_1e^{\sqrt{-1}\theta_1},\cdots, r_ke^{\sqrt{-1}\theta_k}, x_{k+1}, \cdots, x_n).
\end{array}
$$

Now, suppose that we are given a regular flat algebraic connection $\nabla: E\rightarrow E\otimes\Omega^1_U$ and a non-constant regular function $f$ on $U$. On the oriented blow-up, we can introduce a sheaf $\mathscr{A}^{<D}_{\tilde{X}}$ of holomorphic functions whose asymptotic expansions are 0 along $\tilde{D}=\pi^{-1}(D)$. More precisely, for an open set $\Omega\subset\tilde{X}$, a holomorphic function $u$ on $\Omega\setminus \tilde{D}$ belongs to $\mathscr{A}^{<D}_{\tilde{X}}(\Omega)$ if for any compact subset $K\subset \Omega$ and for any $N=(N_1,\cdots ,N_k)\in\mathbb{Z}_{>0}^k$, there exists $C_{K,N}>0$ such that $u$ satisfies
$$|u(x)|\leq C_{K,N}|x_1|^{N_1}\cdots |x_k|^{N_k}\text{ for all }x\in K\setminus\tilde{D}.$$
 Then, the twisted connection $\nabla_{f}=\nabla -df\wedge$ can be prolonged to a morphism of sheaves $\nabla_f : E\otimes\mathscr{A}^{<D}_{\tilde{X}}\rightarrow E\otimes\mathscr{A}^{<D}_{\tilde{X}}\otimes_{\pi^{-1}\mathcal{O}_{X^{an}}}\pi^{-1}\Omega_{X^{an}}^1(*D)$. We put
$$\mathcal{S}^{<D}=\Ker (\nabla_f : E\otimes\mathscr{A}^{<D}_{\tilde{X}}\rightarrow E\otimes\mathscr{A}^{<D}_{\tilde{X}}\otimes_{\pi^{-1}\mathcal{O}_{X^{an}}}\pi^{-1}\Omega_{X^{an}}^1(*D)).$$

\begin{dfn} 
Let $\mathcal{C}^{-p}_{\tilde{X},\tilde{D}}$ denote the sheaf on $\tilde{X}$ associated to the presheaf
$$V\mapsto S_p(\tilde{X},(\tilde{X}\setminus V)\cup\tilde{D}).$$

We define the sheaf of rapid decay chains $\mathcal{C}^{r.d.,-p}_{\tilde{X}}$ by the formula
$$\mathcal{C}^{r.d.,-p}_{\tilde{X}}=\mathcal{C}^{-p}_{\tilde{X},\tilde{D}}\otimes_\mathbb{C} S^{<D}.$$

Then, if $\mathcal{C}^{r.d.}_{\tilde{X}}$ denotes the complex $\mathcal{C}^{r.d.,-\bullet}_{\tilde{X}}$, we define the $p$-th rapid decay homology group by
$$\Homo_p^{r.d}(U^{an},(E,\nabla_f))=\mathbb{H}^{-p}(\tilde{X}, \mathcal{C}^{r.d.}_{\tilde{X}}).$$
\end{dfn}

The following theorem is a special case of the main result of \cite{hien}.

\begin{thm}

Under the notations above, there is a perfect pairing given by integration
$$
\int:\Homo_p^{rd}(U^{an}, (E^\vee,\nabla_f^\vee)) \times \Homo^p_{dR}(U, (E, \nabla_f)) \longrightarrow \mathbb{C}.
$$
\end{thm}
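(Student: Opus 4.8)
The plan is to realize both sides of the pairing as (hyper)cohomology of sheaf complexes living on the real oriented blow-up $\tilde{X}$, and then to deduce perfectness from a Verdier-type duality on this manifold with corners. First I would introduce the companion sheaf $\mathscr{A}^{mod D}_{\tilde{X}}$ of holomorphic functions of \emph{moderate} growth along $\tilde{D}$, that is, those bounded locally by $C_{K,N}|x_1|^{-N_1}\cdots|x_k|^{-N_k}$, together with the moderate de Rham complex $\mathrm{DR}^{mod}(E,\nabla_f)=(E\otimes\mathscr{A}^{mod D}_{\tilde{X}}\otimes_{\pi^{-1}\mathcal{O}_{X^{an}}}\pi^{-1}\Omega^\bullet_{X^{an}}(*D),\,\nabla_f)$. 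The analytic point that even makes the integration map defined is that the product of a rapid decay section with a moderate growth form is again rapid decay, so $\int_\Gamma$ converges absolutely on every rapid decay chain $\Gamma$; Stokes' theorem on $\tilde{X}$, together with the vanishing of the boundary contributions along $\tilde{D}$ forced by the rapid decay condition, shows that the value depends only on the classes, giving a well-defined bilinear map on homology and cohomology.

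The technical heart is a comparison theorem asserting that algebraic de Rham cohomology is computed by the moderate de Rham complex on the blow-up:
$$\Homo^p_{dR}(U,(E,\nabla_f))\;\simeq\;\mathbb{H}^p\bigl(\tilde{X},\mathrm{DR}^{mod}(E,\nabla_f)\bigr).$$
I would prove this by first passing to the analytic comparison $\Homo^p_{dR}(U,(E,\nabla_f))\simeq\mathbb{H}^p(U^{an},\mathrm{DR}^{an})$ (Deligne--Grothendieck, applied to $\nabla_f$, which is still regular on $U$), and then showing that pushing the analytic de Rham complex forward to $\tilde{X}$ and enlarging holomorphic sections to moderate growth sections does not change the hypercohomology. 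Because $\nabla_f=\nabla-df\wedge$ is elementary irregular, the formal and asymptotic structure of $\nabla_f$ along $D$ is controlled explicitly by the single function $f$, so the required asymptotic analysis---existence of flat bases with prescribed growth in the sense of Malgrange--Sibuya, and the relative acyclicity of $\mathscr{A}^{mod D}_{\tilde{X}}$ over $\mathcal{O}$---can be carried out without invoking the full Mochizuki--Kedlaya good formal structure theorem.

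With both complexes in hand, the last step is a duality on $\tilde{X}$. On one side, $\mathcal{C}^{r.d.}_{\tilde{X}}$ computes, up to the homological regrading built into its definition, the relative homology of $(\tilde{X},\tilde{D})$ with coefficients in the local system $\mathcal{S}^{<D}$ of rapid decay flat sections of $(E^\vee,\nabla_f^\vee)$. On the other side, a moderate growth Poincar\'e lemma (again part of the asymptotic analysis) shows that $\mathrm{DR}^{mod}(E,\nabla_f)$ is a resolution of the local system $\mathcal{S}^{mod}$ of moderate growth flat sections of $(E,\nabla_f)$, and the fiberwise pairing $E^\vee\otimes E\to\mathbb{C}$ identifies $\mathcal{S}^{mod}$ with the local system dual to $\mathcal{S}^{<D}$ (rapid decay times moderate growth is rapid decay, hence integrable across the corners). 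Poincar\'e--Lefschetz duality for the manifold with corners $\tilde{X}$ relative to its boundary $\tilde{D}$ then upgrades the integration map to a perfect pairing between $\mathbb{H}^{-p}(\tilde{X},\mathcal{C}^{r.d.}_{\tilde{X}})$ and $\mathbb{H}^p(\tilde{X},\mathrm{DR}^{mod}(E,\nabla_f))$; combined with the comparison of the previous paragraph, this is exactly the asserted duality.

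The step I expect to be the main obstacle is the moderate growth comparison theorem. Away from $D$ everything is classical, and the entire content is concentrated along $\tilde{D}$, where one must control the interaction between the exponential factor coming from $f$ and the moderate growth condition; concretely, the difficulty is to establish that the inclusion of the moderate de Rham complex into the analytic de Rham complex is a quasi-isomorphism at the level of the irregular connection. This is precisely where the elementary structure of $\nabla_f$ is indispensable and where the sharp asymptotic estimates of M. Hien do the real work.
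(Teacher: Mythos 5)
Note first that the paper does not prove this theorem at all: it is quoted verbatim as a special case of the main result of \cite{hien}, so your proposal has to be measured against Hien's argument. Your global architecture does reproduce his (moderate growth de Rham complex on $\tilde{X}$, convergence of $\int_\Gamma$ because rapid decay times moderate growth is integrable, Stokes on the blow-up for well-definedness, and a duality of constructible sheaves on $\tilde{X}$ at the end). However, the step you yourself single out as the technical heart contains a genuine error. The connection $\nabla_f=\nabla-df\wedge$ is \emph{not} regular: regularity is a condition on growth of flat sections along the compactifying divisor $D$, and the elementary connection is by design irregular along $f^{-1}(\infty)$. Hence Deligne--Grothendieck does not apply, and the comparison $\Homo^p_{dR}(U,(E,\nabla_f))\simeq\mathbb{H}^p(U^{an},\mathrm{DR}^{an})$ you start from is false. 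By the holomorphic Poincar\'e lemma the right-hand side equals $\Homo^p(U^{an},\mathcal{L}_f)$ with $\mathcal{L}_f=\Ker\nabla_f^{an}$, and multiplication by $e^{-f}$ (single-valued, nowhere zero on $U^{an}$) gives $\mathcal{L}_f\simeq\mathcal{L}$, so this computes the cohomology of the \emph{regular} connection $\nabla$. For the Kummer example of Section 0 that has dimension $1$, whereas $\dim\Homo^1_{dR}(U,(\mathcal{O}_U,\nabla'))=2$; even more simply, for $U=\mathbb{A}^1$ and $\nabla_f=d-dx$ the algebraic de Rham cohomology vanishes in all degrees while $\Homo^0(U^{an},\mathcal{L}_f)=\mathbb{C}$. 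Your next sentence fails for the same reason: the inclusion of $\mathrm{DR}^{mod}$ into the pushforward of the analytic de Rham complex from $U^{an}$ is precisely \emph{not} a quasi-isomorphism for an irregular connection --- the gap between moderate flat sections and all flat sections is the Stokes phenomenon, i.e.\ the entire content of irregularity. Had your two steps both been correct, the theorem would identify rapid decay homology with the dual of $\Homo^p(U^{an},\mathcal{L})$, which has the wrong dimension.

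The correct route, which is Hien's, never passes through $\mathbb{H}^p(U^{an},\mathrm{DR}^{an})$: one identifies $\Homo^p_{dR}(U,(E,\nabla_f))$ with the hypercohomology on the compactification $X^{an}$ of the \emph{meromorphic} de Rham complex $\mathrm{DR}\bigl(E(*D),\nabla_f\bigr)$ --- this uses affineness of $U$ and GAGA and is valid for arbitrary, possibly irregular, connections --- and then proves $R\pi_*\mathrm{DR}^{mod}\simeq\mathrm{DR}\bigl(E(*D),\nabla_f\bigr)$ on $X^{an}$ by asymptotic analysis (Majima-type asymptotic existence theorems, which for elementary connections are indeed available without the full good-formal-structure theory, as you correctly anticipate). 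With that substitution, your outline becomes essentially Hien's proof. A smaller but real inaccuracy in your last paragraph: $\mathcal{S}^{<D}$ and $\mathcal{S}^{mod}$ are not local systems on $\tilde{X}$; as the paper's own proposition $\mathcal{S}^{<D}=j_!i_*\mathcal{L}$ shows, they are constructible extensions by zero from the open part of the boundary where $e^{\mp f}$ decays, resp.\ is moderate, and the final duality must be a Verdier-type duality for this pair of constructible sheaves (exchanging $j_!$ and $Rj_*$), not Poincar\'e--Lefschetz duality for a local system on the pair $(\tilde{X},\tilde{D})$.
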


Hereafter, we discuss the rapid decay homology group of F. Pham.  We use the same notation as above. For any positive real number $c$, we put $S_c^{\pm}=\{ z\in\mathbb{C}|\pm (\Re z-c)>0\}$. By $\Phi$, we denote a family of supports $\Phi =\{ A\overset{closed}{\subset}U^{an}|\forall c>0, A\cap f^{-1}(\bar{S_c^-})\}$. Then, for any given local system $\mathcal{L}$, we define the rapid decay homology of F.Pham by the formula

\begin{dfn}
$$\tilde{\Homo}_*^{r.d.}(U^{an},\mathcal{L})=\Homo_*^{\Phi}(U^{an},\mathcal{L}).$$
\end{dfn}
Note that this homology group is computed as $$\Homo_*^{\Phi}(U^{an},\mathcal{L})=\Homo_*\left(\underset{c>0}{\varprojlim}C_\bullet(U^{an},f^{-1}(S_c^+);\mathcal{L})\right).$$

Let us prove the following simple lemma.

\begin{lmm}\label{lem14}

There exists a finite set $F\subset\mathbb{C}$ such that $f:U^{an}\setminus f^{-1}(F)\rightarrow \mathbb{C}\setminus F$ defines a fiber bundle.

\end{lmm}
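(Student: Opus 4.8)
The plan is to deduce this from the theory of stratified maps, specifically Thom's first isotopy lemma, after replacing the non-proper map $f$ by a proper one through compactification. Since $U$ is only quasi-projective, $f$ need not be proper, so the topology of the fibers $f^{-1}(c)$ may jump not merely at the critical values of $f$ on $U$ but also because of phenomena at infinity; isolating these finitely many \emph{atypical values} is the real content of the lemma.

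First I would compactify the situation. Using the smooth projective compactification $X\supset U$ with $D=X\setminus U$ a normal crossing divisor, the regular function $f$ extends to a rational map $X\dashrightarrow\mathbb{P}^1$. After a finite sequence of blow-ups along smooth centers contained in the boundary, resolving the indeterminacy locus, I obtain a smooth projective variety $\rho:X^\prime\rightarrow X$ together with a genuine morphism $\hat f:X^\prime\rightarrow\mathbb{P}^1$ extending $f$. Because $X^\prime$ is projective, $\hat f$ is automatically proper. I would arrange that $U$ is identified with an open subset of $X^\prime$ disjoint from the exceptional and boundary divisors, with $\hat f|_U=f$, and set $Z:=X^\prime\setminus U$.

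Next I would choose a Whitney stratification of $X^\prime$ compatible with the closed subset $Z$, so that $U$ is an open stratum and $Z$ is a union of strata; such stratifications always exist for algebraic data. Since there are finitely many strata and every datum is algebraic, generic smoothness (equivalently Sard's theorem applied stratum by stratum) shows that the set of $c\in\mathbb{P}^1$ over which the restriction of $\hat f$ to some stratum fails to be a submersion is a proper closed analytic subset of $\mathbb{P}^1$, hence finite. Removing $\infty$, I take $F\subset\mathbb{C}$ to be the finite set of such atypical values lying in $\mathbb{C}$.

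Finally, over $\mathbb{C}\setminus F$ the proper map $\hat f$ is by construction a submersion on every stratum of the Whitney stratification, so Thom's first isotopy lemma furnishes a stratum-preserving local trivialization of $\hat f$ there. In particular the trivialization restricts to the open stratum $U$, which is exactly the assertion that $f:U^{an}\setminus f^{-1}(F)\rightarrow\mathbb{C}\setminus F$ is a locally trivial fiber bundle. The main obstacle is the third step: one must be certain that, after resolving indeterminacy, the boundary $Z$ really can be Whitney-stratified compatibly with $\hat f$, since it is precisely the failure of the isotopy lemma along the boundary strata that would signal extra topology changes coming from infinity. Once such a stratification is in hand, the properness of $\hat f$ and the isotopy lemma do the rest, and the restriction to $U$ completes the proof.
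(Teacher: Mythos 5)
Your proof is correct and follows essentially the same route as the paper: extend $f$ to a proper morphism onto $\mathbb{P}^1$ on a smooth projective compactification, choose a Whitney stratification in which $U$ and the boundary are unions of strata, and apply the Thom--Mather first isotopy lemma away from the finitely many stratum-wise atypical values. The only difference is one of detail: you spell out the resolution of indeterminacy and the Sard-type finiteness argument, both of which the paper's terser proof leaves implicit in the phrase ``take a compactification so that the diagram commutes'' and in its appeal to \cite{V}.
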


\begin{proof}
Take a smooth projective compactification $X$ of $U$ so that the following diagram is commutative
$$
\begin{array}{ccc}
U & \overset{f}{\rightarrow} & \mathbb{A}^1\\
\rotatebox{90}{$\hookleftarrow$} & &\rotatebox{90}{$\hookleftarrow$}\\
X & \overset{f}{\rightarrow} &\mathbb{P}^1. 
\end{array}
$$

Here, $f:X\rightarrow\mathbb{P}^1$ is an extension of $f$. We equip $X$ with a Whitney stratification so that $U$ and $X\setminus U$ are unions of strata. Then, the lemma is an easy consequence of the first isotopy lemma of Thom-Mather (\cite{V}).
\end{proof}

Thanks to this lemma, when $c^\prime >c>0$ are large enough, the morphism of pairs $(U^{an},f^{-1}(S_{c^\prime}^+))\rightarrow (U^{an},f^{-1}(S_c^+))$ induces a homotopy equivalence so that the induced chain map $C_\bullet(U^{an},f^{-1}(S_{c^\prime}^+);\mathcal{L})\rightarrow C_\bullet(U^{an},f^{-1}(S_c^+);\mathcal{L})$ is an isomorphism. Since $S_c^+$ is contractible, we obtain the following long exact sequence:

$$\cdots\rightarrow\Homo_k(f^{-1}(t);\mathcal{L})\rightarrow\Homo_k(U^{an};\mathcal{L})\rightarrow\tilde{\Homo}_k^{r.d.}(U^{an};\mathcal{L})\rightarrow\cdots\;\; \text{(exact)}$$

where $t\in S_c^+$ for $c>0$ large enough.

\begin{exa}
Let us consider the simplest case which was fully investigated in \cite{Pham}. Let $U=\mathbb{A}^n$ and $\mathcal{L}=\mathbb{C}$. Then, by the exact sequence above, we obtain
$$\tilde{\Homo}^{r.d.}_*(\mathbb{C}^n,\mathbb{C})\simeq\tilde{\Homo}_*(f^{-1}(t)^{an},\mathbb{C})$$

where $t\in\mathbb{C}$ is a generic point, and $\sim$ on the right hand side stands for the reduced homology.
\end{exa}

\section{Comparison theorem}

In this section, we are going to prove that the rapid decay homology of F.Pham is isomorphic to that of Bloch-Esnault-Hien when the connection is the so-called elementary irregular connection.


We fix a regular flat algebraic connection $(E,\nabla)$ and a non-constant regular function $f$ on $U$. As before, we suppose that $f$ is extended to a morphism $f:X\rightarrow \mathbb{P}^1$. Consider the lift of $f$ to the oriented blow-up
$$
\begin{array}{ccc}
\tilde{X} & \overset{\tilde{f}}{\rightarrow} & \widetilde{\mathbb{P}^1}\\
\rotatebox{90}{$\leftarrow$} & &\rotatebox{90}{$\leftarrow$}\\
X & \overset{f}{\rightarrow} &\mathbb{P}^1 
\end{array}
$$

where $\widetilde{\mathbb{P}^1}$ is the oriented blow-up of $\mathbb{P}^1$ along $\infty$. Note that for each $\theta\in\mathbb{R}$, the closure of the ray $[0,\infty)e^{\sqrt{-1}\theta}$ in $\widetilde{\mathbb{P}^1}$, and $\widetilde{\mathbb{P}^1}\setminus\mathbb{C}$ has only one intersection point which we will denote by $e^{\sqrt{-1}\theta}\infty$. Then, we have a decomposition $\widetilde{\mathbb{P}^1}=\mathbb{C}\cup S^1\infty$ of the oriented blow-up $\widetilde{\mathbb{P}^1}$. Let us put $\widetilde{D^{r.d.}}=\tilde{f}^{-1}\left(\{e^{\sqrt{-1}\theta}\infty | -\frac{\pi}{2}<\theta <\frac{\pi}{2}\}\right)$, which corresponds to the rapid decay directions of $e^{-f}$. However, since the dimension of $X$ is larger than 1 in general, we need to subtract irrelevant divisors as follows. We first decompose the divisor $D$ as $D=f^{-1}(\infty)\cup D^\prime$. Then, we put $\widetilde{D^\prime}^{r.d.}=\widetilde{D}^{r.d.}\setminus\pi^{-1}(D^\prime)$.

If $U^{an}\overset{i}{\hookrightarrow}U^{an}\cup \widetilde{D^\prime}^{r.d.}\overset{j}{\hookrightarrow}\tilde{X}$ denote the sequence of inclusions, we have the following

\begin{prp}

$$\mathcal{S}^{<D}=j_!i_*\mathcal{L},$$
 where $\mathcal{L}$ is the local system of flat sections of $\nabla^{an}$.
\end{prp}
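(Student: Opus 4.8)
The plan is to prove the asserted identity of sheaves on $\tilde{X}$ by a stalk-wise comparison, organised according to the position of a point $p\in\tilde{X}$ relative to the boundary $\tilde{D}=\pi^{-1}(D)$ and to the rapid decay locus $\widetilde{D^\prime}^{r.d.}$. Away from the boundary, on $U^{an}=\tilde{X}\setminus\tilde{D}$, a local horizontal section of $\nabla_f$ has the shape $e^{-f}v$ with $v$ a flat section of $\nabla^{an}$ (i.e. $v\in\mathcal{L}$), and the rapid decay condition is vacuous; dividing by the single-valued factor $e^{-f}$ gives a canonical isomorphism $\mathcal{S}^{<D}|_{U^{an}}\xrightarrow{\sim}\mathcal{L}$, manifestly compatible with the right-hand side since both $i_*$ and $j_!$ leave the sheaf unchanged on the open set $U^{an}$. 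It therefore remains to compare the stalks at a point $p\in\tilde{D}$. For this I would fix normal crossing coordinates $x=(x_1,\dots,x_n)$ near $\pi(p)$ with $D=\{x_1\cdots x_k=0\}$, split the indices according to the decomposition $D=f^{-1}(\infty)\cup D^\prime$, and use that (after the blow-ups implicit in the existence of $\tilde{f}$) $f$ is monomial along its polar divisor, so that on the oriented blow-up $\tilde{f}$ reads off the limiting argument $\theta$ of $f$ and the defining inequality $-\tfrac{\pi}{2}<\theta<\tfrac{\pi}{2}$ of $\widetilde{D^\prime}^{r.d.}$ is, by construction, precisely the locus where the factor $e^{-f}$ has rapid decay.

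The stalk computation then splits into three cases. First, if $p\in\widetilde{D^\prime}^{r.d.}$, so $p$ lies over $f^{-1}(\infty)$ in the open rapid decay sector, the exponential factor in $e^{-f}v$ decays faster than every monomial $|x_1|^{N_1}\cdots|x_k|^{N_k}$, because $|f|$ blows up like a negative power of the $r_i$ while $\Re f\to+\infty$; since $v\in\mathcal{L}$ has only moderate growth (the connection $\nabla$ being regular), the product lies in $\mathscr{A}^{<D}_{\tilde{X}}$, and one gets $\mathcal{S}^{<D}_p\cong\mathcal{L}_{\pi(p)}=(i_*\mathcal{L})_p=(j_!i_*\mathcal{L})_p$, a neighbourhood of $p$ meeting $U^{an}$ in a simply connected sector so that no monodromy invariance intervenes. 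Second, if $p$ lies over $f^{-1}(\infty)$ but in a non–rapid-decay direction, including the Stokes directions $\theta=\pm\tfrac{\pi}{2}$, every neighbourhood of $p$ meets the sector where $e^{-f}$ grows (or, on the Stokes lines, is merely oscillatory with bounded real part), so the rapid decay estimate fails and $\mathcal{S}^{<D}_p=0$; correspondingly $p\notin\widetilde{D^\prime}^{r.d.}$ and the right-hand stalk vanishes by extension by zero. Third, if $p$ lies over $D^\prime$, then $f$ is finite at $\pi(p)$ and $e^{-f}$ is bounded above and below, so $e^{-f}v$ has rapid decay if and only if $v$ does; but a nonzero flat section of a regular connection grows like $x^\lambda(\log x)^j$ and can never decay faster than all powers, whence $\mathcal{S}^{<D}_p=0$, matching the vanishing of $(j_!i_*\mathcal{L})_p$ since $\widetilde{D^\prime}^{r.d.}$ excludes $\pi^{-1}(D^\prime)$ by definition. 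These identifications are all induced by the single trivialisation $e^{-f}v\mapsto v$ and are compatible with restriction, so they glue to a global isomorphism $\mathcal{S}^{<D}\cong j_!i_*\mathcal{L}$.

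I expect the genuine work to be concentrated in the first two cases, i.e. in the sharp analytic comparison on the oriented blow-up: proving that in the \emph{open} sector the exponential decay dominates the moderate growth of every regular flat section uniformly on compacta, while failing exactly on the Stokes lines where $\Re f$ stays bounded. Controlling $\Re f$ as a function of the blow-up coordinates $(r_i,\theta_i)$ when several polar components meet, and checking that the oriented blow-up separates the angular monodromy sheets so that $i_*\mathcal{L}$ acquires full rank along $\widetilde{D^\prime}^{r.d.}$, are the points that genuinely require the regularity of $\nabla$ and the good (monomial) position of $f$; everything else is bookkeeping with the definitions of $i_*$, $j_!$ and $\mathscr{A}^{<D}_{\tilde{X}}$.
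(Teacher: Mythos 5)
The paper itself omits the proof of this proposition, deferring to the argument of Hien (pp.~11--12 of \cite{hien}); your stalk-wise sectorial analysis is exactly that argument in outline, and most of it is sound. On $U^{an}$ the trivialisation $e^{-f}v\mapsto v$ (modulo the paper's loose sign convention: with $\nabla_f=\nabla-df\wedge$ the flat sections are literally $e^{f}v$, and it is the dual connection whose flat sections decay where $\Re f\to+\infty$; the paper conflates these, so I do not count this against you) identifies $\mathcal{S}^{<D}|_{U^{an}}$ with $\mathcal{L}$; at points of $\widetilde{D^\prime}^{r.d.}$ the exponential decay of $e^{-f}$ beats the moderate growth of regular flat sections uniformly on compact subsectors; in non-rapid-decay directions over $f^{-1}(\infty)$, and at points over $D^\prime$ where $f$ is finite, the moderate-decay lower bound for a nonzero regular flat section kills the stalk. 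All of this is correct and is where you correctly locate the analytic content.

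There is, however, a genuine gap: your case analysis breaks down exactly at the corner points $p\in\pi^{-1}(D^\prime\cap f^{-1}(\infty))\cap\widetilde{D}^{r.d.}$, i.e.\ over the intersection of $D^\prime$ with the polar divisor of $f$, in a rapid-decay direction. Such $p$ is not in $\widetilde{D^\prime}^{r.d.}$ (that set excludes $\pi^{-1}(D^\prime)$ by definition), so your first case does not apply; and your third case argues via ``$f$ is finite at $\pi(p)$ and $e^{-f}$ is bounded above and below'', which is false there --- $e^{-f}$ decays rapidly at these corners, so the claim that ``$e^{-f}v$ has rapid decay if and only if $v$ does'' has no justification, and indeed your second-case reasoning would wrongly suggest the stalk is nonzero. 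The correct argument, and the one point that actually forces the paper to define $\widetilde{D^\prime}^{r.d.}=\widetilde{D}^{r.d.}\setminus\pi^{-1}(D^\prime)$ rather than use $\widetilde{D}^{r.d.}$ itself, is that membership in $\mathscr{A}^{<D}_{\tilde{X}}$ demands the uniform estimate $|u|\leq C_{K,N}|x_1|^{N_1}\cdots|x_k|^{N_k}$ \emph{jointly in all components of} $D$, while $e^{-f}$, with $\frac{1}{f}=x_1^{m_1}\cdots x_s^{m_s}$, provides decay only in the coordinates $x_1,\dots,x_s$ of $f^{-1}(\infty)$ and none in the coordinates $x_{s+1},\dots,x_t$ of $D^\prime$: fixing $x_1,\dots,x_s$ small and nonzero in the sector and letting $x_{s+1}\to 0$, the moderate-decay lower bound $|v|\geq c\,|x_{s+1}|^{M}$ contradicts the required bound with any $N_{s+1}>M$, whence $\mathcal{S}^{<D}_p=0$ as needed. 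Since this corner case is precisely what distinguishes $j_!i_*\mathcal{L}$ (extension by zero along $\pi^{-1}(D^\prime)$) from the naive guess, it cannot be absorbed into ``bookkeeping''; your proof needs this supplementary argument to be complete.
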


The proof of this proposition is essentially same as the argument in \cite{hien} pp11-12. We omit the proof.

Since we have an inclusion $(U^{an}\cup\widetilde{D^\prime}^{r.d.},\widetilde{D^\prime}^{r.d.})\subset (\tilde{X},\widetilde{D})$, and $j:U^{an}\cup\widetilde{D^\prime}^{r.d}\rightarrow\tilde{X}$ is an open embedding, we have a morphism 
$$
\mathcal{C}^{-\bullet}_{U^{an}\cup \widetilde{D^\prime}^{r.d.},\widetilde{D^\prime}^{r.d.}}\rightarrow j^{-1}\mathcal{C}^{-\bullet}_{\tilde{X},\widetilde{D}}=j^{!}\mathcal{C}^{-\bullet}_{\tilde{X},\widetilde{D}}
$$

which is induced from the morphism of presheaves 

$$
S(U^{an}\cup \widetilde{D^\prime}^{r.d.},(U^{an}\cup \widetilde{D^\prime}^{r.d.}\setminus V)\cup\widetilde{D^\prime})\rightarrow S(\tilde{X},(\tilde{X}\setminus V)\cup \tilde{D}).
$$

Rigorously speaking, for two presheaves $\mathscr{F}$ and $\mathscr{G}$, we consider a morphism

$$\mathscr{F}\rightarrow j^{-1}\mathscr{G}.$$

This induces a morphism

$$
\mathscr{F}^\dagger\rightarrow(j^{-1}\mathscr{G})^\dagger.
$$

But since we have a canonical morphism $\mathscr{G}\rightarrow\mathscr{G^\dagger}$, we have a morphism of presheaves $j^{-1}\mathscr{G}\rightarrow j^{-1}(\mathscr{G}^\dagger)$ and this induces a morphism of sheaves by the universality of sheafification $(j^{-1}\mathscr{G})^\dagger\rightarrow j^{-1}(\mathscr{G}^\dagger)$. We finally obtain $\mathscr{F}^\dagger\rightarrow j^{-1}(\mathscr{G})^\dagger$.

Now, applying the functor $j_!$ yields to a morphism
$$
j_!\mathcal{C}^{-\bullet}_{U^{an}\cup \widetilde{D^\prime}^{r.d.},\widetilde{D^\prime}^{r.d.}}\rightarrow j_!j^{!}\mathcal{C}^{-\bullet}_{\tilde{X},\widetilde{D}}\rightarrow\mathcal{C}^{-\bullet}_{\tilde{X},\widetilde{D}}.
$$

In the end, we obtain a morphism

$$
j_!\left( \mathcal{C}^{-\bullet}_{U^{an}\cup \widetilde{D^\prime}^{r.d.},\widetilde{D^\prime}^{r.d.}}\otimes i_*\mathcal{L}^\vee\right)\rightarrow \mathcal{C}^{r.d.,-\bullet}_{\tilde{X}}.
$$

In view of the fact that $U^{an}\cup \widetilde{D^\prime}^{r.d.}$ is a manifold with boundary and $j$ is an open embedding, this morphism can be confirmed to be an isomorphism.

\begin{prp}
$\Homo^{r.d.}_p(U^{an},(E ,\nabla))\simeq \Homo_p(U^{an}\cup \widetilde{D^\prime}^{r.d.}, \widetilde{D^\prime}^{r.d.}, i_* \mathcal{L}).$
\end{prp}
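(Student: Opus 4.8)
The plan is to obtain the statement by applying the hypercohomology functor $\mathbb{H}^{-p}(\tilde{X},-)$ to the isomorphism of complexes of sheaves
$$
j_!\left( \mathcal{C}^{-\bullet}_{U^{an}\cup \widetilde{D^\prime}^{r.d.},\widetilde{D^\prime}^{r.d.}}\otimes i_*\mathcal{L}\right)\overset{\sim}{\rightarrow} \mathcal{C}^{r.d.,-\bullet}_{\tilde{X}}
$$
constructed immediately above, which rests on the preceding proposition $\mathcal{S}^{<D}=j_!i_*\mathcal{L}$ together with the projection formula for the open immersion $j$. By the very definition of rapid decay homology, $\mathbb{H}^{-p}(\tilde{X},\mathcal{C}^{r.d.,-\bullet}_{\tilde{X}})$ is $\Homo^{r.d.}_p(U^{an},(E,\nabla))$, so the entire content reduces to identifying the hypercohomology of the left-hand side with the relative homology group $\Homo_p(U^{an}\cup \widetilde{D^\prime}^{r.d.},\widetilde{D^\prime}^{r.d.},i_*\mathcal{L})$.

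Write $Y=U^{an}\cup \widetilde{D^\prime}^{r.d.}$, $B=\widetilde{D^\prime}^{r.d.}$ and $\mathcal{G}^{-\bullet}=\mathcal{C}^{-\bullet}_{Y,B}\otimes i_*\mathcal{L}$, a complex of sheaves on the open subset $Y\subset\tilde{X}$. First I would record that each term $\mathcal{C}^{-q}_{Y,B}$ of the sheaf of relative chains is soft; this is the standard softness of the sheaf of singular chains, which is available here thanks to the manifold-with-corners (semi-analytic) structure of $Y$, and tensoring with the locally constant sheaf $i_*\mathcal{L}$ preserves softness. Consequently $j_!\mathcal{G}^{-q}$ is acyclic for $\Gamma(\tilde{X},-)$: since $\tilde{X}$ is compact one has $\Homo^k(\tilde{X},j_!\mathcal{G}^{-q})=\Homo^k_c(Y,\mathcal{G}^{-q})$, and this vanishes for $k>0$ because soft sheaves are $\Gamma_c$-acyclic. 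Hence the hypercohomology of $j_!\mathcal{G}^{-\bullet}$ is computed by the cohomology of the complex of global sections $\Gamma(\tilde{X},j_!\mathcal{G}^{-\bullet})$.

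Second, I would identify those global sections. Since $j$ is an open immersion into the compact space $\tilde{X}$, a global section of $j_!\mathcal{G}^{-q}$ is a section of $\mathcal{G}^{-q}$ over $Y$ whose support is closed in $\tilde{X}$, hence compact, so $\Gamma(\tilde{X},j_!\mathcal{G}^{-q})=\Gamma_c(Y,\mathcal{G}^{-q})$. The compactly supported sections of $\mathcal{C}^{-\bullet}_{Y,B}$ are precisely the finite relative singular chains of the pair $(Y,B)$, whence $\Gamma_c(Y,\mathcal{G}^{-\bullet})$ is the complex of finite relative chains with coefficients in $i_*\mathcal{L}$, whose homology is the ordinary relative homology $\Homo_p(Y,B,i_*\mathcal{L})$. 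Combining the two steps yields $\mathbb{H}^{-p}(\tilde{X},j_!\mathcal{G}^{-\bullet})\simeq\Homo_p(Y,B,i_*\mathcal{L})$, and with the displayed isomorphism this proves the proposition.

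I expect the main obstacle to be the careful bookkeeping of the passage from locally finite to ordinary homology effected by $j_!$. Over the open part $Y$ the sheaf of chains computes Borel--Moore homology through its ordinary global sections, and it is exactly the extension by zero into the compactification $\tilde{X}$, combined with compactness, that cuts this down to compactly supported sections and hence to genuine finite relative chains. Making rigorous both the softness of the relative chain sheaf and the identification of $\Gamma_c(Y,\mathcal{C}^{-\bullet}_{Y,B})$ with finite relative singular chains --- classical facts that nonetheless require the semi-analytic geometry of the oriented blow-up --- is where the real work lies; the formal manipulations with $j_!$, the projection formula, and hypercohomology are then routine.
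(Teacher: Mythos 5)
Your overall route is the same as the paper's: apply global sections to the isomorphism $j_!\bigl(\mathcal{C}^{-\bullet}_{Y,B}\otimes i_*\mathcal{L}\bigr)\simeq\mathcal{C}^{r.d.,-\bullet}_{\tilde{X}}$, identify $\Gamma(\tilde{X},j_!(-))$ with $\Gamma_c(Y,-)$ using compactness of $\tilde{X}$, and then identify the result with the complex of finite relative singular chains. The genuine gap is in your acyclicity step: the sheaves $\mathcal{C}^{-q}_{Y,B}$ of singular chains are \emph{not} soft (nor c-soft), there is no such ``standard softness of the sheaf of singular chains,'' and no appeal to the manifold-with-corners or semi-analytic structure of $Y$ repairs this --- softness already fails for the plain singular chain sheaf on $\mathbb{R}^2$. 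The reason is structural. A section of the chain sheaf over an open set $V$ assigns to each singular simplex $\tau$ a coefficient function on $\mathrm{im}\,\tau\cap V$ which is locally constant, hence constant on each connected component of $\mathrm{im}\,\tau\cap V$; for a \emph{global} section the coefficient is a single constant, since $\mathrm{im}\,\tau$ is connected. Now take a $1$-simplex $\tau$ in $\mathbb{R}^2$ that starts inside the closed unit disc $K$, leaves the disc $V$ of radius $2$, travels around outside $V$, re-enters and ends inside $K$, so that $\mathrm{im}\,\tau\cap V$ has two components $P_1,P_2$ both meeting $K$. The compatible family ($\tau$ on $V\setminus P_2$, $0$ on $V\setminus P_1$) defines a section over $V$ whose $\tau$-coefficient is $1$ on $P_1$ and $0$ on $P_2$; its restriction to $K$ extends to no global section, because a global section carries a single $\tau$-coefficient. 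So ``soft sheaves are $\Gamma_c$-acyclic'' has nothing to apply to. This impossibility of cutting simplices (one can only subdivide them, and subdivision changes a chain, hence a section, on the nose) is precisely why Bredon introduces the weaker notion of a \emph{homotopically fine} complex, and that is what the paper invokes: homotopical fineness of the complex $\mathcal{C}^{-\bullet}_{Y,B}\otimes i_*\mathcal{L}$ --- a property of the complex, not $\Gamma_c$-acyclicity of its individual terms --- is what yields $\mathbb{H}^{-p}_c(Y,\mathcal{G}^{-\bullet})\simeq\Homo^{-p}\bigl(\Gamma_c(Y,\mathcal{G}^{-\bullet})\bigr)$ for the paracompactifying family of compact supports.

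Your second step, by contrast, is correct, and it is also implicitly needed to finish the paper's own proof: $\Gamma(\tilde{X},j_!\mathcal{G}^{-q})=\Gamma_c(Y,\mathcal{G}^{-q})$, and compactly supported sections of the chain sheaf are exactly finite relative chains. Interestingly, this does not require any semi-analytic input either; it follows from the same observation as above: for a compactly supported section every simplex carrying a nonzero (constant) coefficient has image inside a fixed compact set, and local finiteness of the representing chains then leaves only finitely many such simplices, which assemble into an honest finite relative chain inducing the section. So the one thing to change in your argument is to replace the softness claim by Bredon's homotopical fineness of the singular chain complex (as the paper does, citing Bredon); the rest of your bookkeeping with $j_!$ and hypercohomology then goes through.
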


\begin{proof}
Denoting by $\Gamma_{\tilde{X}}$ the global section, we obtain
$$
\begin{array}{ccc}
\mathbb{R}\Gamma_{\tilde{X}}\left(j_!\left( \mathcal{C}^{-\bullet}_{U^{an}\cup \widetilde{D^\prime}^{r.d.},\widetilde{D^\prime}^{r.d.}}\otimes i_*\mathcal{L}\right)\right) &\simeq &\hspace{-6mm} \mathbb{R}\Gamma_{\tilde{X}}\circ\mathbb{R}j_!\left(\left( \mathcal{C}^{-\bullet}_{U^{an}\cup \widetilde{D^\prime}^{r.d.},\widetilde{D^\prime}^{r.d.}}\otimes i_*\mathcal{L}\right)\right) \\
 & \simeq & \mathbb{R}\Gamma_c\left(U^{an}\cup\widetilde{D^\prime}^{r.d.},  \mathcal{C}^{-\bullet}_{U^{an}\cup \widetilde{D^\prime}^{r.d.},\widetilde{D^\prime}^{r.d.}}\otimes i_*\mathcal{L}\right)\\
 & \simeq& \hspace{-22mm}C_{-\bullet}(U^{an}\cup\widetilde{D^\prime}^{r.d.},\widetilde{D^\prime}^{r.d.}; i_*\mathcal{L}).
\end{array}
$$
The last isomorphism follows from the fact that the homology  complex is homotopically fine. See \cite{Bredon}.
\end{proof}

Now, we would like to prepare a lemma necessary for the proof of the comparison theorem. It concerns a construction of homotopy which does not preserve the stratification.



\begin{lmm}
$$C_{-\bullet}(U^{an}\cup\widetilde{D^\prime}^{r.d.},\widetilde{D^\prime}^{r.d.}; i_*\mathcal{L})\overset{quasi. isom.}{\longrightarrow} C_{-\bullet}(U^{an}\cup\widetilde{D}^{r.d.},\widetilde{D}^{r.d.}; i_*\mathcal{L}).$$
\end{lmm}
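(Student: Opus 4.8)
The plan is to realize the asserted quasi-isomorphism by exhibiting the inclusion of pairs $(B,B')\hookrightarrow(A,A')$ as a homotopy equivalence of pairs, where $A=U^{an}\cup\widetilde{D}^{r.d.}$, $A'=\widetilde{D}^{r.d.}$, $B=U^{an}\cup\widetilde{D^\prime}^{r.d.}$ and $B'=\widetilde{D^\prime}^{r.d.}$. One has $B\cup A'=A$ and $B\cap A'=B'$, so the two pairs share the same interior $U^{an}$ and differ only in the boundary piece $Z:=A'\setminus B'=\widetilde{D}^{r.d.}\cap\pi^{-1}(D^\prime)$. Since $\widetilde{D}^{r.d.}=\tilde f^{-1}(\{e^{\sqrt{-1}\theta}\infty\mid-\tfrac{\pi}{2}<\theta<\tfrac{\pi}{2}\})$ forces $f=\infty$, the set $Z$ lies only over the corner locus $f^{-1}(\infty)\cap D^\prime$. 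I would build a homotopy $H\colon A\times[0,1]\to A$ with $H_0=\mathrm{id}$, $H_t(A')\subseteq A'$ for all $t$, $H_1(A)\subseteq B$ and $H_1(A')\subseteq B'$, and which restricts to a homotopy of $(B,B')$ into itself; then $H_1$ is a homotopy inverse of the inclusion at the level of pairs, and $H$ induces the desired quasi-isomorphism on the twisted relative chain complexes $C_{-\bullet}(-,-;i_*\mathcal{L})$. Equivalently, one checks that $B\hookrightarrow A$ and $B'\hookrightarrow A'$ are homotopy equivalences and concludes by the five lemma applied to the long exact sequences of the two pairs.

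The homotopy is built from a local model. Near a point of $f^{-1}(\infty)\cap D^\prime$ I would choose normal crossing coordinates $x=(x_1,\dots,x_n)$ with $D=\{x_1\cdots x_k=0\}$, ordered so that $x_1,\dots,x_j$ are the pole components (with $f=x_1^{-m_1}\cdots x_j^{-m_j}g$, $g$ a local unit and $m_i\ge 1$) and $x_{j+1},\dots,x_k$ are $D^\prime$ components. In the polar coordinates $(r_i,e^{\sqrt{-1}\theta_i})$ of $\pi$, the rapid decay condition reads $\arg f=-\sum_{i\le j}m_i\theta_i+\arg g(x)\in(-\tfrac{\pi}{2},\tfrac{\pi}{2})\pmod{2\pi}$, while $Z$ is cut out by the further equations $r_i=0$ for the $D^\prime$ indices met. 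The retraction inflates these $D^\prime$ radii from their initial value up to a fixed positive target, thereby pushing the point off $\pi^{-1}(D^\prime)$ into $U^{an}$ or onto the pole-only boundary; points whose $D^\prime$ radii already exceed the target are left fixed by tapering with a cut-off in $r$. Note that points of $B'$ have positive $D^\prime$ radii, so the same motion keeps $B'$ inside $B'$, which is what makes $H$ restrict to $(B,B')$.

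The main obstacle is that inflating the $D^\prime$ radii changes the value of the unit $g$, hence $\arg g$, so a moved point may leave the open sector and fall out of $\widetilde{D}^{r.d.}$ — this is precisely why the homotopy cannot preserve the stratification. To remedy this I would simultaneously rotate one of the pole angles $\theta_i$ ($i\le j$) so as to keep $\arg f$ fixed at its initial value inside the open sector; this is solvable exactly because some pole order $m_i\ge1$ is nonzero, the compensating rotation being (schematically) $\theta_i\mapsto\theta_i+\tfrac{1}{m_i}\bigl(\arg g(x)-\arg g(x_{\mathrm{infl}})\bigr)$, which is continuous and well defined. With this correction $H$ carries $\widetilde{D}^{r.d.}$ into itself throughout and onto $\widetilde{D^\prime}^{r.d.}$ at time $1$, while interior points are merely displaced within $U^{an}$.

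The remaining work is globalization: I would patch the local homotopies by a partition of unity subordinate to a finite cover of a neighborhood of $f^{-1}(\infty)\cap D^\prime$ by such charts, using a common positive target radius and a common cut-off, so that $H$ is the identity outside that neighborhood. The genuinely delicate point is to check that at corners where several pole and several $D^\prime$ components meet, the patched compensating rotations still keep every moved point inside the open sector $(-\tfrac{\pi}{2},\tfrac{\pi}{2})$; here the openness of the sector supplies the needed room, and the tapering guarantees compatibility with the identity away from the corners. Finally, since $H$ is a homotopy of pairs over $U^{an}$ compatible with the inclusion $i$, it lifts to the coefficient system $i_*\mathcal{L}$ and yields the claimed quasi-isomorphism. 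I expect the sector-preserving patching of the compensating rotations to be the one step requiring real care.
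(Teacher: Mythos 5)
Your overall strategy is the same as the paper's: exhibit the inclusion of pairs as a homotopy equivalence by pushing points off $\pi^{-1}(D^\prime)$ (which indeed only matters over the corner locus $f^{-1}(\infty)\cap D^\prime$) while keeping the rapid-decay condition under control, and your ``inflation of the $D^\prime$ radii'' is exactly the paper's local vector field $\frac{\partial}{\partial r_{s+1}}+\cdots+\frac{\partial}{\partial r_t}$. But there is a genuine gap at the globalization step, and it is precisely the step you flag. Patching finite-time homotopies by a partition of unity and then arguing that ``the openness of the sector supplies the needed room'' does not work: points of $\widetilde{D}^{r.d.}$ have $\arg f$ arbitrarily close to $\pm\frac{\pi}{2}$, so there is no uniform room, and any patching error, however small, expels such points from the open sector, destroying the requirement $H_t(\widetilde{D}^{r.d.})\subseteq\widetilde{D}^{r.d.}$ that a homotopy of pairs must satisfy. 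There is also a local defect of the same nature: your compensating rotation of a pole angle $\theta_i$ is exact only where $r_i=0$; at a corner where several pole components meet, a point can have $r_i=0$ but $r_{i^\prime}>0$ small for another pole index, and any continuous assignment of the rotation among the pole angles feeds back into $\arg g$ (since $g$ depends on the coordinates with $r_{i^\prime}>0$), so even your local homotopies preserve $\arg f$ only approximately, not exactly.

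The paper kills both problems with two devices you should adopt. First, it absorbs the unit into a coordinate: near a corner one can choose coordinates so that $\frac{1}{f}=z_1^{m_1}\cdots z_s^{m_s}$ exactly, with no unit $g$; then the radial push in the $D^\prime$ directions does not change $f$ at all, and no compensation is needed in the first place. Second, it globalizes at the infinitesimal level rather than the level of homotopies: the two requirements on the vector field, $\eta\tilde{f}=0$ and $\eta_q\in N^+_{\widetilde{D^\prime},q}$, are convex (linear, respectively conical) conditions on $\eta$, so a partition of unity applied to the local fields yields a global field satisfying them \emph{exactly}; its flow is complete because $\tilde{X}$ is compact, preserves $\tilde{f}$ on the nose (hence maps $\widetilde{D}^{r.d.}$ into itself with no room-to-spare argument), and for $t>0$ moves everything off $\pi^{-1}(D^\prime)$, giving the homotopy equivalence of pairs. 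If you recast your construction as local vector fields subject to the linear constraint $d\tilde{f}(\eta)=0$ (or, keeping the unit, $d(\arg f)(\eta)=0$, which your compensating rotation realizes infinitesimally) and patch those instead of the homotopies, your argument closes the gap and becomes the paper's proof.
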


\begin{proof}
Let us remember a basic notations of manifold with corners (\cite{Joyce}). Let $M$ be a manifold with corners.  Let $x=(x^\prime ,x^{\prime\prime})=(x_1,\cdots ,x_l,x_{l+1},\cdots ,x_m)$ be a coordinate at $q\in M$ so that it corresponds to the open set $[0,\varepsilon)^{l}\times(-\varepsilon ,\varepsilon)^{m-l}.$ We may assume $x(q)=0.$ Then the subset 
$$IS_q^+(M)=\sum_{i=1}^{l}\mathbb{R}_{>0}\frac{\partial}{\partial x_i}|_q+\sum_{i=l+1}^{m}\mathbb{R}\frac{\partial}{\partial x_i}|_q$$ of $T_qM$ does not depend on the choice of coordinate.

Consider a canonical morphism of manifold with boundaries $p:\widetilde{X}\rightarrow\widetilde{X(D^\prime)}$ obtained by collapsing $\pi^{-1}(D^\infty)$ to $D^{\infty}$. We then  put 
$$N^+_{\tilde{D^\prime},q}=p_*^{-1}\left(IS^+_{q}(\widetilde{X(D^\prime)})\right).$$
We would like to construct a vector field $\eta$ on $\tilde{X}$ so that
 $$\eta\tilde{f}=0\text{ and }\eta_q\in N^+_{\widetilde{D^\prime}, q}.$$
At each point $q\in D^\infty\cap D^\prime$, we take a coordinate system $z$ of $X$ so that 
\\
$D^\infty=\{ z_1\cdots z_s=0\}$, $D^\prime=\{ z_{s+1}\cdots z_t=0\}$, and $\frac{1}{f}=z_1^{m_1}\cdots z_s^{m_s}$. Now we put locally around $q$, 

$$\eta^{(q)}=\frac{\partial}{\partial r_{s+1}}+\cdots +\frac{\partial}{\partial r_{t}}.$$  

This satisfies the two desired conditions locally. By means of partition of unity, we get the desired $\eta$. Furthermore, by this construction, there is a vector field $\tilde{\eta}$ on $\widetilde{X(D^\prime)}$ such that $\eta_q\in IS^+_q(\widetilde{X(D^\prime)})$ and $p_*\eta=\tilde{\eta}$.

Since $\tilde{X}$ is compact, the flow of $\eta$ is complete and since $p_*\eta=\tilde{\eta}$, we have $p\Phi_t^\eta =\Phi_t^{\tilde{\eta}}$. Now, $\Phi^{\tilde{\eta}}_t(\widetilde{X(D^\prime)})\cap\widetilde{D^\prime}=\phi$ implies
$$\Phi^\eta_t(\tilde{X})\cap\widetilde{D^\prime}=\phi$$ for any $t>0.$ It can be confirmed that $\Phi^\eta_t(U^{an})\subset U^{an}$ since $\Phi^{\tilde{\eta}}_t(U^{an})\subset U^{an}$ and that $\Phi_t^{\eta}(\widetilde{D}^{r.d.})\subset \widetilde{D^\prime}^{r.d.}$ since $\eta \tilde{f}=0.$ Summing up, we have a homotopy equivalence 

$$(U^{an}\cup\widetilde{D^\prime}^{r.d.},\widetilde{D^\prime}^{r.d.})\rightarrow (U^{an}\cup\widetilde{D}^{r.d.},\widetilde{D}^{r.d.}).$$

\end{proof}







Now we are able to prove the following comparison theorem.

\begin{thm}\label{comparison}
$$\tilde{\Homo}^{r.d.}(U^{an}, \mathcal{L})\simeq\Homo^{r.d.}(U^{an},(E,\nabla_f))$$
\end{thm}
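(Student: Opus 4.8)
The plan is to chain together the two results immediately preceding the theorem and then to reconcile the resulting relative homology over the oriented blow-up with Pham's support homology. By Proposition 2.2 and Lemma 2.3 we already have, for each $p$,
$$\Homo^{r.d.}_p(U^{an},(E,\nabla_f))\simeq\Homo_p(U^{an}\cup\widetilde{D}^{r.d.},\widetilde{D}^{r.d.};i_*\mathcal{L}),$$
so it remains to identify the right-hand side with $\tilde{\Homo}^{r.d.}_p(U^{an},\mathcal{L})=\Homo_p^{\Phi}(U^{an},\mathcal{L})$. Writing $W=U^{an}\cup\widetilde{D}^{r.d.}$, the sheaf $i_*\mathcal{L}$ restricts to $\mathcal{L}$ on the interior $U^{an}$, while $\widetilde{D}^{r.d.}$ is exactly the part of the boundary of $W$ lying over the rapid decay arc $\{e^{\sqrt{-1}\theta}\infty\mid -\tfrac{\pi}{2}<\theta<\tfrac{\pi}{2}\}$, i.e. over the directions in which $\Re f\to+\infty$.

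The geometric heart of the argument is the claim that $\{\,f^{-1}(S_c^+)\cup\widetilde{D}^{r.d.}\,\}_{c>0}$ is a cofinal fundamental system of neighborhoods of $\widetilde{D}^{r.d.}$ in $W$. Indeed, in the oriented blow-up topology a basic neighborhood of a point of $\widetilde{D}^{r.d.}$ sitting over the direction $\theta\in(-\tfrac{\pi}{2},\tfrac{\pi}{2})$ has the shape $\{\,|f|>R,\ |\arg f-\theta|<\delta\,\}$, on which $\Re f>0$ is large; conversely, since the arc is open, for $c$ large the set $f^{-1}(S_c^+)=\{\Re f>c\}$ is squeezed against the circle at infinity precisely along the rapid decay directions, so every neighborhood of $\widetilde{D}^{r.d.}$ contains some $f^{-1}(S_c^+)\cup\widetilde{D}^{r.d.}$. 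By Lemma 1.4 and the discussion following it, for $c'>c$ large the inclusion $f^{-1}(S_{c'}^+)\hookrightarrow f^{-1}(S_c^+)$ is a homotopy equivalence, each being homotopy equivalent to the generic fibre $f^{-1}(t)$; moreover the same fibre bundle structure exhibits $\widetilde{D}^{r.d.}\hookrightarrow f^{-1}(S_c^+)\cup\widetilde{D}^{r.d.}$ as a deformation retract, realized by pushing the generic fibre out toward the circle at infinity along the flow of a vector field increasing $\Re f$, in the spirit of the proof of Lemma 2.3.

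Granting these two points, the comparison is formal. Since $f^{-1}(S_c^+)\cup\widetilde{D}^{r.d.}$ retracts onto $\widetilde{D}^{r.d.}$, we may replace the boundary by this neighborhood in the relative group, and excising the closed subset $\widetilde{D}^{r.d.}$, which lies in the interior of the neighborhood, gives
$$\Homo_p(W,\widetilde{D}^{r.d.};i_*\mathcal{L})\cong\Homo_p\bigl(U^{an},f^{-1}(S_c^+);\mathcal{L}\bigr).$$
Because the transition maps stabilize to isomorphisms for $c$ large, there is no $\varprojlim^1$ term and the right-hand side computes $\Homo_p\bigl(\varprojlim_{c>0}C_\bullet(U^{an},f^{-1}(S_c^+);\mathcal{L})\bigr)=\Homo_p^{\Phi}(U^{an},\mathcal{L})=\tilde{\Homo}^{r.d.}_p(U^{an},\mathcal{L})$, which is the asserted isomorphism. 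The main obstacle is the neighborhood claim of the second paragraph: one must verify, using the manifold-with-corners structure of $\tilde{X}$ and in particular the behaviour of $\tilde f$ at the corners $f^{-1}(\infty)\cap D^\prime$, both that the $f^{-1}(S_c^+)$ are genuinely cofinal among neighborhoods of $\widetilde{D}^{r.d.}$ (a point that is delicate near the endpoints $\theta=\pm\tfrac{\pi}{2}$ of the arc, where $f^{-1}(S_c^+)$ hugs the circle at infinity only at very large $|f|$) and that the retraction onto the boundary is compatible with the local system. It is precisely the passage from $\widetilde{D^\prime}^{r.d.}$ to $\widetilde{D}^{r.d.}$ effected by Lemma 2.3 that renders this neighborhood description clean.
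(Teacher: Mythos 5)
Your opening reduction (Proposition 2.2 combined with Lemma 2.3) and your closing excision\-/stabilization steps coincide with the paper's strategy, but the two geometric claims you interpose are where the proof actually lives, and neither survives scrutiny. First, the claim you call the geometric heart --- that $\{f^{-1}(S_c^+)\cup\widetilde{D}^{r.d.}\}_{c>0}$ is a \emph{cofinal} fundamental system of neighbourhoods of $\widetilde{D}^{r.d.}$ in $W=U^{an}\cup\widetilde{D}^{r.d.}$ --- is false, already for $U=\mathbb{A}^1$, $f=z$. Take
$$V=\widetilde{D}^{r.d.}\cup\{z\in\mathbb{C}\ :\ |\arg z|<\tfrac{\pi}{2},\ (\Re z)^2>|z|\}.$$
This is a neighbourhood of the open arc: at $e^{\sqrt{-1}\theta}\infty$ any sector of angular half-width $\delta$ with $|\theta|+\delta<\frac{\pi}{2}$ and inner radius $R>\cos^{-2}(|\theta|+\delta)$ lies in $V$. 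But for every $c>0$ the points $z=2c+\sqrt{-1}y$ lie in $S_c^+$ and satisfy $(\Re z)^2/|z|\to 0$ as $y\to\infty$, so $S_c^+\not\subset V$: a neighbourhood of the \emph{open} arc may pinch arbitrarily fast at its ends $\theta=\pm\frac{\pi}{2}$, while $S_c^+$ presses against the circle at infinity in directions arbitrarily close to $\pm\frac{\pi}{2}$. Second --- and this is what your formal deduction actually uses, since ``neighbourhood plus deformation retract plus excision'' never invokes cofinality --- the assertion that $f^{-1}(S_c^+)\cup\widetilde{D}^{r.d.}$ deformation retracts onto $\widetilde{D}^{r.d.}$ is precisely the hard content of the theorem, and your justification for it does not stand: the vector field of Lemma 2.3 satisfies $\eta\tilde{f}=0$, i.e.\ it \emph{preserves} $f$ and only pushes away from $\widetilde{D^\prime}$; it does not increase $\Re f$ and cannot be used to push the generic fibre out to the circle at infinity. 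Constructing a retraction whose time-infinity map is continuous on $\tilde{X}$ --- exactly near the open ends of the arc and at the corners over $D^\prime\cap f^{-1}(\infty)$ --- is the whole difficulty, and it is left unaddressed.

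The paper is structured to avoid both problems at once. It exhausts the pair by truncated pieces $(Y_\varepsilon,\partial Y_\varepsilon)$, where $\partial Y_\varepsilon=\tilde{f}^{-1}(e^{\sqrt{-1}[-\frac{\pi}{2}+\varepsilon,\frac{\pi}{2}-\varepsilon]}\cdot\infty)$ lies over a \emph{compact} sub-arc and $Y_\varepsilon=f^{-1}(A_\varepsilon)\cup\partial Y_\varepsilon$ with $A_\varepsilon$ deleting sectors around the two limiting directions, so that $\Homo_*(W,\widetilde{D}^{r.d.};i_*\mathcal{L})\simeq\varinjlim_{\varepsilon>0}\Homo_*(Y_\varepsilon,\partial Y_\varepsilon;\mathcal{L})$. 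Inside each $Y_\varepsilon$ the angular truncation and the properness of $\tilde{f}$ make $\{f^{-1}(\{\Re z>R^\prime\}\cap A_\varepsilon)\cup\partial Y_\varepsilon\}_{R^\prime>0}$ genuinely cofinal among neighbourhoods of $\partial Y_\varepsilon$; the needed deformation-retract neighbourhoods are then produced not by a flow but by Lojasiewicz's triangulation of semi-analytic pairs together with derived (regular) neighbourhoods, after which excision and the stabilization coming from Lemma 1.4 identify the limit with $\Homo_*(U^{an},f^{-1}(\{\Re z>R^\prime\}))\simeq\tilde{\Homo}^{r.d.}_*(U^{an},\mathcal{L})$. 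If you want to keep your shorter route, you must either prove the deformation retraction of $f^{-1}(S_c^+)\cup\widetilde{D}^{r.d.}$ onto $\widetilde{D}^{r.d.}$ directly (which is essentially equivalent to the theorem) or insert the paper's truncation-and-direct-limit step before making any neighbourhood or retraction claims.
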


\begin{proof}
We put $Y_\varepsilon =f^{-1}(A_\varepsilon)\cup \tilde{f}^{-1}(e^{\sqrt{-1}[-\frac{\pi}{2}+\varepsilon ,\frac{\pi}{2} -\varepsilon]}\cdot \infty)$ where $A_\varepsilon=\mathbb{C}\setminus\{ R+\frac{1}{\varepsilon}+re^{\sqrt{-1}\theta}|0<r, \frac{\pi}{2}-\varepsilon <\theta<\frac{\pi}{2}+\varepsilon\text{ or }\frac{3}{2}\pi-\varepsilon <\theta<\frac{3}{2}\pi+\varepsilon\}.$
We also put $\partial Y_\varepsilon =\tilde{f}^{-1}(e^{\sqrt{-1}[-\frac{\pi}{2}+\varepsilon ,\frac{\pi}{2} -\varepsilon]}\cdot \infty).$

Since $\left(U^{an}\cup\widetilde{D}^{r.d.},\widetilde{D}^{r.d.}\right)=\bigcup_{\varepsilon >0}\left(\text{Int}Y_\varepsilon ,\text{Int}\partial Y_\varepsilon\right)$, we have that
$$\Homo_*(U^{an}\cup\widetilde{D}^{r.d.},\widetilde{D}^{r.d.}; i_*\mathcal{L})\simeq\underset{\varepsilon>0}{\varinjlim}\Homo_*(Y_\varepsilon ,\partial Y_\varepsilon ;\mathcal{L}).$$

Here, since $\tilde{f}:\tilde{X}\rightarrow\tilde{\mathbb{P}^{1}}$ is a proper map, $\{f^{-1}(\{\Re (z)>R^\prime\}\cap A_\varepsilon)\cup\partial Y_\varepsilon\}_{R^\prime>0}$ is a fundamental system of neighbourhoods of $\partial Y_\varepsilon$ in $Y_\varepsilon$. 

Now, since $f^{-1}(\{\Re(z)>R^\prime\}\cap A_\varepsilon)\cup\partial Y_\varepsilon$ and $\partial Y_\varepsilon$ are semi-analytic sets, by the famous result of Lojasiewicz(\cite{Loj}), we obtain that there is a triangulation of  $f^{-1}(\{\Re(z)>R^\prime\}\cap A_\varepsilon)\cup\partial Y_\varepsilon$ such that $\partial Y_\varepsilon$ is a subcomplex of it.

Now, by means of derived neighbourhood, one can show that there is a deformation retract neighbourhood $N$ of $\partial Y_\varepsilon$ (see \cite{hud} lemma2.10). 

Therefore, we have
$$
\begin{array}{ccc}
\Homo_*(Y_\varepsilon ,\partial Y_\varepsilon)&\simeq&\Homo_*\left(\underset{N:\text{ deformation retract nbd of }\partial Y_\varepsilon}{\varprojlim}C_\bullet(Y_\varepsilon ,N)\right)\\
 &\overset{cofinality}{\simeq}&\Homo_*\left(\underset{N:\text{nbd of }\partial Y_\varepsilon}{\varprojlim}C_\bullet(Y_\varepsilon ,N)\right)\\
 &\overset{cofinality}{\simeq}&\Homo_*\left(\underset{R^\prime >0}{\varprojlim}C_\bullet(Y_\varepsilon ,f^{-1}(\{\Re (z)>R^\prime\}\cap A_\varepsilon)\cup\partial Y_\varepsilon)\right)\\
 &\overset{excision}{\simeq}&\Homo_*\left(f^{-1}(A_\varepsilon),f^{-1}(A_\varepsilon\cap\{\Re(z)>R^\prime\})\right).
\end{array}
$$

Note that quasi-isomorphism is preserved under projective limit. On the other hand, the last homology is isomorphic to 
$$\Homo_*\left(U^{an}, f^{-1}\{\Re(z)>R^\prime\}\right)$$
for $R^\prime>0$ large enough.

In view of lemma, we obtain
$$
\begin{array}{ccc}
\Homo_*^{r.d.}(U^{an},(E,\nabla_f))&=&\Homo_*(U^{an}\cup\widetilde{D^\prime}^{r.d},\widetilde{D^\prime}^{r.d};i_*\mathcal{L})\\
 &\simeq&\Homo_*(U^{an}\cup\widetilde{D}^{r.d},\widetilde{D}^{r.d};\mathcal{L})\\
&\simeq&\underset{\varepsilon>0}{\varinjlim}\Homo_*(Y_\varepsilon,\partial Y_\varepsilon ;\mathcal{L})\\
&\simeq&\Homo_*(U^{an},f^{-1}(\{\Re(z)>R^\prime \}))\\
 &\simeq&\tilde{\Homo}_*^{r.d.}(U^{an},\mathcal{L}).
\end{array}
$$ 
\end{proof}

\section{Rapid decay homology groups associated to hyperplane arrangements}

We are going to apply our comparison theorem to elementary irregular connections associated to hyperplane arrangements. First, we remember a result of regular cases.

Let $l_j(t)$ be a real linear polynomial $l_j(t)=a_{0j}+a_{1j}t_1+\cdots +a_{nj}t_n\;\\
(a_{kj}\in\mathbb{R},\; \displaystyle\prod_{k=1}^na_{kj}\neq 0,\;  j=1,\cdots ,N).$ We put

$$A_j=\{t\in\mathbb{A}^n|l_j(t)=0\},\; X=\mathbb{A}^n\setminus\cup_{j=1}^NA_j,\; D=\cup_{j=1}^NA_j.$$

Let further $V$ denote a finite dimensional complex vector space and $P_j$ be an element of $\End(V)$. We introduce a trivial bundle $E=X\times V$ on $X$ and a connection 
$$\nabla=d+\sum_{j=1}^NP_jd\log l_j(t)\wedge :E\rightarrow\Omega_E^1(\log D).$$

By considering residues, we can confirm that $\nabla^{2}=0$ if and only if for any maximal subfamily $\{ A_{j_\nu}\}_{1\leq\nu\leq q}$ such that

$\codim_{\mathbb{C}}(A_{j_1}\cap\cdots\cap A_{j_q})=2,\; 1\leq\forall\nu\leq q,\; [P_{j_\nu},P_{j_1}+\cdots +P_{j_q}]=0.$

Here, $\{ A_{j_\nu}\}_{1\leq\nu\leq q}$ is called a maximal subfamily if 
$$\{ A_j|A_{j_1}\cap\cdots\cap A_{j_q}\subset A_j\}=\{ A_{j_\nu}\}_{1\leq\nu\leq q}.$$

We compactify $\mathbb{A}^{n}$ to $\mathbb{P}^n$ and denote by $A_{N+1}$ the hypersurface at infinity $A_{N+1}=\mathbb{P}^n\setminus\mathbb{A}^n$. By abuse of notation, we denote the closure of $A_j$ in $\mathbb{P}^n$ by the same notation. We also put 
$$P_{N+1}=-\sum_{j=1}^NP_j.$$
As in \cite{Kohno}, we define the following notion.

\begin{dfn} 
We say $(E, \nabla)$ is generic if
$$
\begin{array}{cccc}
 &(1)& &\hspace{-7.6cm}\text{eigenvalues of }P_j\text{ are not integers.}\\
 &(2)& &\text{For any maximal subfamily }\{ A_{j_\nu}\}_{1\leq\nu\leq q}\text{ such that }\codim_{\mathbb{C}}[A_{j_1}\cap\cdots\cap A_{j_q}]=r<q,\\
 &   &  &\hspace{-5.5cm}\text{ any eigenvalue of }P_{j_1}+\cdots P_{j_q}\text{ is not an integer.}
\end{array}
$$
\end{dfn}

The following result has essentially been proved by various authors in various settings (\cite{Kohno}, \cite{OT}). 

\begin{thm}\label{thmKohno}
Suppose $(E,\nabla)$ is generic.

Then, putting $\mathcal{L}=\Ker\nabla^{an}$, we have a canonical isomorphism
$$\Homo_p(X^{an},\mathcal{L})\simeq\Homo_p^{lf}(X^{an},\mathcal{L})\text{ for all }p.$$

Furthermore, we have 
$$\Homo_n^{lf}(X^{an}, \mathcal{L})=\bigoplus_{k}\mathbb{C}\Delta_k\otimes\mathcal{L}_{x_k}$$
and
$$\Homo_p(X^{an}, \mathcal{L})=0\;(p\neq n),$$

where $\Delta_k$ are bounded chambres of $Y\cap\mathbb{R}^n$ and $x_k\in\Delta_k$ is a point.
\end{thm}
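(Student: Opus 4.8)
The plan is to establish the three assertions together by induction on the dimension $n$, using a generic linear projection to fiber $X^{an}$ over a lower-dimensional arrangement complement, together with a duality argument for the comparison between ordinary and locally finite homology. Since $X$ is a smooth affine variety of complex dimension $n$, the Andreotti--Frankel theorem already gives that $X^{an}$ has the homotopy type of a CW complex of real dimension $\leq n$, so $\Homo_p(X^{an},\mathcal{L})=0$ automatically for $p>n$; the substance of the theorem is the vanishing for $p<n$, the concentration of $\Homo_n^{lf}$ on bounded chambers, and the comparison isomorphism, all of which require genericity.

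For the base case $n=1$, I would take $X^{an}=\mathbb{C}\setminus\{N\text{ points}\}$. Condition (1) says every $P_j$ has no integral eigenvalue, and since $P_{N+1}=-\sum_{j=1}^N P_j$ also has no integral eigenvalue by (2) applied at infinity, every local monodromy of $\mathcal{L}$ (around each puncture and around $\infty$) has no eigenvalue equal to $1$. The invertibility of one such $e^{2\pi\sqrt{-1}P_j}-\mathrm{id}$ forces $\Homo_0(X^{an},\mathcal{L})=0$, and an Euler-characteristic count $\chi(X^{an},\mathcal{L})=\rank\mathcal{L}\cdot(1-N)$ then gives $\dim\Homo_1=\rank\mathcal{L}\cdot(N-1)$, which matches the number of bounded intervals of the real arrangement. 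For the inductive step I would use a generic projection $p:X^{an}\to B$ onto the complement $B$ of a generic arrangement in $\mathbb{C}^{n-1}$, whose fibers are copies of $\mathbb{C}$ minus finitely many points. By the base case the fiberwise homology sheaf is concentrated in degree $1$ and is again a local system on $B$; I would check that it again satisfies the genericity conditions, apply the inductive hypothesis to $B$, and let the Leray spectral sequence of $p$ collapse to yield concentration in degree $n$ for $X^{an}$.

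For the comparison $\Homo_p\simeq\Homo_p^{lf}$, I would pass to cohomology by Poincar\'e--Lefschetz duality on the complex manifold $X^{an}$, which identifies $\Homo_p^{lf}(X^{an},\mathcal{L})$ with $\Homo^{2n-p}(X^{an},\mathcal{L})$ and $\Homo_p(X^{an},\mathcal{L})$ with $\Homo^{2n-p}_c(X^{an},\mathcal{L})$, so the claim becomes the statement that the forget-supports map $\Homo^{*}_c(X^{an},\mathcal{L})\to\Homo^{*}(X^{an},\mathcal{L})$ is an isomorphism. Its cone is computed by local cohomology along the boundary divisor $D\cup A_{N+1}$, and the nonintegrality of the eigenvalues of each $P_j$ and of the partial sums $P_{j_1}+\cdots+P_{j_q}$ in conditions (1)--(2) makes every such local contribution vanish; concretely this is the regularization of cycles of Aomoto--Kita, representing every locally finite cycle by an honest finite one. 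Finally, each bounded chamber $\Delta_k$ of the real arrangement, together with a flat section of $\mathcal{L}$ at an interior point $x_k$, is a genuine locally finite $n$-cycle, giving the classes $\Delta_k\otimes\mathcal{L}_{x_k}$; these span because Zaslavsky's formula identifies the number of bounded chambers with $(-1)^n\chi(X^{an})$, hence $\#\{\Delta_k\}\cdot\rank\mathcal{L}=(-1)^n\chi(X^{an},\mathcal{L})=\dim\Homo_n^{lf}(X^{an},\mathcal{L})$ once concentration in degree $n$ is known.

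I expect the main obstacle to be the inductive step: one must verify that the local system obtained on $B$ from the fiberwise top homology is again generic, which requires tracking how the monodromies of $\mathcal{L}$ recombine at the new divisors that appear when intersections $A_{j_1}\cap\cdots\cap A_{j_q}$ escape to infinity inside the fibers of $p$. Matching these monodromies with the eigenvalues of the partial sums $P_{j_1}+\cdots+P_{j_q}$ attached to maximal subfamilies is exactly what condition (2) is designed for, and organizing this bookkeeping over the intersection lattice is the technical heart of the argument.
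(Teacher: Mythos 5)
Your route (induction on dimension via a generic projection, Poincar\'e--Lefschetz duality, and an Euler-characteristic count) is genuinely different from the paper's, but it has a gap that counting cannot close. Knowing $\Homo_p(X^{an},\mathcal{L})=0$ for $p\neq n$ and invoking Zaslavsky's formula only shows that the \emph{number} of bounded chambers times $\rank\mathcal{L}$ equals $\dim\Homo_n^{lf}(X^{an},\mathcal{L})$; it does not show that the natural map $\bigoplus_k\mathcal{L}_{x_k}\rightarrow\Homo_n^{lf}(X^{an},\mathcal{L})$, $s\mapsto[\Delta_k\otimes s]$, is injective, and linear independence of chamber classes in twisted homology is precisely the nontrivial content of the last assertion. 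The paper gets concentration and the basis \emph{simultaneously}: the gradient flow of $F_\eta=\sum_j\eta_j\log|l_j|$ (whose stable manifolds are exactly the bounded chambers, as in \cite{AK}) pushes $X^{an}\setminus\cup_k\Delta_k$ into an arbitrary neighbourhood $W$ of the hyperplane at infinity; the vanishing $\Homo^p(W\cap X^{an},\mathcal{L})=0$ (local central arrangements have Euler characteristic zero by \cite{OT}, then the Euler--Poincar\'e formula and Leray) gives $\Homo_*^{lf}(X^{an}\setminus\cup_k\Delta_k,\mathcal{L})=0$; and then the long exact sequence for the closed subset $\cup_k\Delta_k\subset X^{an}$ yields an isomorphism $\bigoplus_k\Homo_n^{lf}(\Delta_k,\mathcal{L})\simeq\Homo_n^{lf}(X^{an},\mathcal{L})$, injectivity included, in one stroke. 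Without an analogue of this vanishing statement (or an intersection-pairing argument \`a la Kita--Yoshida), your chamber classes could a priori be dependent even though the dimensions match.

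The second gap is the inductive step, which you flag yourself but which is not mere bookkeeping: a generic linear projection restricted to $X^{an}$ is a fiber bundle only over the complement of the \emph{discriminant} arrangement (the projections of the codimension-two flats), so the base carries the Gauss--Manin local system $R^1p_!\mathcal{L}$ of rank $(N-1)\rank\mathcal{L}$, and closing the induction requires identifying the residue endomorphisms of this new connection along every discriminant hyperplane (including at infinity) and proving their eigenvalues are non-integral assuming only conditions (1)--(2) for the original $P_j$. That verification is the actual content of such an inductive proof, and nothing in your outline supplies it. Finally, for the comparison $\Homo_p\simeq\Homo_p^{lf}$ you describe the vanishing of the boundary contribution as ``concretely the regularization of cycles of Aomoto--Kita,'' but regularization is a rank-one, general-position device (the paper uses it only in Corollary 3.9 under exactly those hypotheses); in the generality of the theorem this comparison is Kohno's theorem \cite{Kohno}, whose proof resolves the arrangement to normal crossings and checks non-integrality of residues along all exceptional divisors --- which is what condition (2) on maximal subfamilies is designed for --- and the paper accordingly cites it rather than reproving it.
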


\begin{proof}
The first isomorphism was proved in \cite{Kohno}. Let us then discuss the second part.

For any positive real numbers $\eta=(\eta_1,\cdots,\eta_N)\in\mathbb{R}_{>0}^N$, consider the function $F_\eta$ defined by 

$$F_\eta(t)=\sum_{j=1}^N\eta_j\log|l_j(t)|.$$

As in \cite{AK} Chapter 4, one can prove that $F_\eta$ is Morse and that the stable manifolds of $F_\eta$ are exactly bounded chambers $\{\Delta_k\}_k$. Hence, if we consider the gradient flow $\Phi_s$ of $F_\eta$, we have that for any $x\in X\setminus\cup_k\Delta_k$ and any positive real number $R>0$, there exists $s_0>0$ such that for all $s\geq s_0$, $|\Phi_s(x)|>R.$ Therefore, for any open neighbourhood $W$ of $A_{N+1}$ in $\mathbb{P}^n,$ we have a surjection 
$$\Homo_p(W\cap X^{an},\mathcal{L}^\vee)\twoheadrightarrow\Homo_p(X\setminus\cup_k\Delta_k,\mathcal{L}^\vee),$$
or equivalently,
$$\Homo_p^{lf}(X\setminus\cup_k\Delta_k,\mathcal{L})\hookrightarrow\Homo_p^{lf}(W\cap X^{an},\mathcal{L}).$$

Let us construct $W$ so that
$$\Homo^p(W\cap X^{an},\mathcal{L})=0$$
for any k. This implies that $\Homo_p^{lf}(W\cap X^{an},\mathcal{L})=0$ by Poincar\'e duality. Denote by $\iota$ the inclusion $\iota: W\cap X^{an}\hookrightarrow X^{an}$. Firstly, we prove $R^p\iota_*(\mathcal{L})=0$ on some neighbourhood $W$ of $A_{N+1}\subset\mathbb{P}^n$ for all $p$. However, this can be proved thanks to Theorem3.3.7. of \cite{OT}. In fact, for any point $x\in A_{N+1}$, if we take a sufficiently small neighbourhood $W_x$, $W_x\cap X^{an}$ is homotopic to a central arrangement, whose Euler characteristic is 0 by Theorem3.3.7 of \cite{OT}. Since $R^p\iota_*(\mathcal{L})=0$ for $p\neq n$, we can conclude the assertion by the Euler-Poincar\'e formula for local systems.

By the spectral sequence of Leray, we have that for some small neighbourhood $W$ of $A_{N+1},$ $\Homo^p(W\cap X^{an},\mathcal{L})=0$ for any p. We have thus proved that 
$$\Homo_p^{lf}(X\setminus\cup_k\Delta_k,\mathcal{L})=0.$$

Now, by the exact sequence of Gysin, we have
$$\cdots\rightarrow\Homo_p^{lf}(\cup_k\Delta_k,\mathcal{L})\rightarrow\Homo_p^{lf}(X,\mathcal{L})\rightarrow\Homo_p^{lf}(X\setminus\cup_k\Delta_k,\mathcal{L})\rightarrow\cdots\;\;(\text{exact}).$$
Summing up we obtain the theorem.
\end{proof}

Now, let us take a real linear polynomial $f(t)$. For any real number $R>0$, $\nabla|_{f^{-1}(R)\cap X}$ is another regular connection. 

\begin{dfn}
The hyperplane arrangement $\{A_j\}_j$ is said to be asymptotically generic with respect to $f$ if for sufficiently large $R>0,$ the induced connection $(E,\nabla)|_{f^{-1}(R)\cap X}$ is generic.  
\end{dfn}
Note that for sufficiently large $R>0$, connections $(E^{an},\nabla^{an})|_{f^{-1}(R)\cap X}$ are all isomorphic so that this definition is well-defined.

Now, we would like to compute the basis of rapid decay homology group associated to the elementary irregular connection
$$\nabla_f=\nabla-df\wedge.$$

\begin{thm}\label{mainthm}
Suppose $(E,\nabla)$ is generic and is also asymptotically generic. Let $\{\Delta_k\}_k$ denote all bounded chambres of $\mathbb{R}^n\cap X$ and let $\{\tilde{\Delta}_l\}_l$ denote all unbounded chambres which intersect with bounded chambers of $f^{-1}(R)\cap X$ for $R>0$ big enough. By abuse of notations, we denote by $\tilde{\Delta}_l$ the intersection of $\tilde{\Delta}_l$ and $\{ \Re f<R\}$. Then, for any $R>0$ big enough, there is an isomorphism

$$\Homo_p^{r.d.}(X^{an}, (\mathcal{O}_X,\nabla_f))\simeq\Homo^{lf}_p(X^{an}\setminus f^{-1}(R),\mathcal{L})$$

for any $p.$ In particular, $$\Homo_p^{r.d.}(X^{an}, (\mathcal{O}_X, \nabla_f))=0\;(p\neq n).$$

Moreover, 
$$\Homo^{lf}_n(X^{an}\setminus f^{-1}(R),\mathcal{L})=\bigoplus_{k}\mathbb{C}\Delta_k\otimes\mathcal{L}_{x_k}\oplus\bigoplus_l\mathbb{C}\tilde{\Delta}_l\otimes\mathcal{L}_{\tilde{x_l}},$$

where $x_k\in\Delta_k$ and $\tilde{x}_l\in\tilde{\Delta}_l$ are points.
\end{thm}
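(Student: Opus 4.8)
The plan is to reduce everything to Theorem \ref{comparison} and Theorem \ref{thmKohno}, and to compute the two sides of the asserted isomorphism separately, showing that genericity forces both to be concentrated in degree $n$ with the same graded pieces. Since $E=\mathcal{O}_X$, Theorem \ref{comparison} gives $\Homo_p^{r.d.}(X^{an},(\mathcal{O}_X,\nabla_f))\simeq\tilde{\Homo}_p^{r.d.}(X^{an},\mathcal{L})$, and the long exact sequence for Pham's homology recalled in Section 1,
$$\cdots\rightarrow\Homo_k(f^{-1}(t);\mathcal{L})\rightarrow\Homo_k(X^{an};\mathcal{L})\rightarrow\tilde{\Homo}_k^{r.d.}(X^{an};\mathcal{L})\rightarrow\Homo_{k-1}(f^{-1}(t);\mathcal{L})\rightarrow\cdots,$$
reduces the computation to the homology of $X^{an}$ and of a generic fibre $f^{-1}(t)$ with $\Re t$ large.

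First I would feed Theorem \ref{thmKohno} into this sequence. By genericity $\Homo_k(X^{an};\mathcal{L})$ is concentrated in degree $n$ with basis the bounded chambers $\{\Delta_k\}$. The fibre $f^{-1}(t)\cap X$ is, for $\Re t$ large, an $(n-1)$-dimensional hyperplane-arrangement complement whose induced connection is generic precisely by the hypothesis of asymptotic genericity (Lemma \ref{lem14} guarantees that the topological and combinatorial type of the fibre stabilizes for large $|t|$). Applying Theorem \ref{thmKohno} to this slice, $\Homo_k(f^{-1}(t);\mathcal{L})$ is concentrated in degree $n-1$ with basis the bounded chambers of the slice. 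Substituting these into the long exact sequence annihilates $\tilde{\Homo}_k^{r.d.}$ for $k\neq n$ and leaves the short exact sequence
$$0\rightarrow\Homo_n(X^{an};\mathcal{L})\rightarrow\tilde{\Homo}_n^{r.d.}(X^{an};\mathcal{L})\rightarrow\Homo_{n-1}(f^{-1}(t);\mathcal{L})\rightarrow 0,$$
which splits over $\mathbb{C}$.

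Next I would treat the right-hand side by the Borel--Moore (locally finite) localization sequence for the closed fibre $f^{-1}(R)\subset X^{an}$,
$$\cdots\rightarrow\Homo_p^{lf}(f^{-1}(R);\mathcal{L})\rightarrow\Homo_p^{lf}(X^{an};\mathcal{L})\rightarrow\Homo_p^{lf}(X^{an}\setminus f^{-1}(R);\mathcal{L})\rightarrow\Homo_{p-1}^{lf}(f^{-1}(R);\mathcal{L})\rightarrow\cdots.$$
Here the crucial point is that genericity, again via Theorem \ref{thmKohno}, gives $\Homo_*^{lf}\simeq\Homo_*$ both for $X^{an}$ and for the generic fibre $f^{-1}(R)$, so the two outer terms coincide with those above. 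The identical bookkeeping yields $\Homo_p^{lf}(X^{an}\setminus f^{-1}(R);\mathcal{L})=0$ for $p\neq n$ and a split short exact sequence computing $\Homo_n^{lf}$ as $\Homo_n(X^{an};\mathcal{L})\oplus\Homo_{n-1}(f^{-1}(R);\mathcal{L})$. Comparing the two computations produces the asserted isomorphism, and identifying the summands geometrically gives the explicit decomposition: the bounded chambers $\Delta_k$ account for the first summand, while each bounded chamber of the slice $f^{-1}(R)\cap X$ is the cross-section of a unique unbounded chamber $\tilde{\Delta}_l$ of $\mathbb{R}^n\cap X$ meeting $\{f=R\}$, realized as the locally finite cycle $\tilde{\Delta}_l\cap\{\Re f<R\}$ carrying the weight $\mathcal{L}_{\tilde{x}_l}$.

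The main obstacle I expect is not the homological algebra but the geometric input: verifying that the fibre $f^{-1}(t)\cap X$ really is a generic arrangement, so that Theorem \ref{thmKohno} applies, and that its chamber combinatorics is stable for $\Re t$ large, together with the precise matching of the split summand with the truncated unbounded chambers $\tilde{\Delta}_l$ and the correct local-system weights. It is worth stressing that genericity is essential rather than cosmetic: without it one has $\Homo_*^{lf}\neq\Homo_*$, and the identity $\Homo^{r.d.}\simeq\Homo^{lf}(X^{an}\setminus f^{-1}(R))$ already fails for $X=\mathbb{A}^1$, $f=\mathrm{id}$ with trivial coefficients, where the left side vanishes while the right side does not. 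The whole argument therefore hinges on both sides being forced into the single degree $n$.
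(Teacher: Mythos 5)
Your proposal is correct and follows essentially the same route as the paper: the paper's (very terse) proof likewise combines Lemma \ref{lem14} and Theorem \ref{comparison} to invoke Pham's long exact sequence for the left-hand side, uses the identification $\Homo_k^{lf}(X^{an}\setminus f^{-1}(R),\mathcal{L})=\Homo_k^{lf}(X^{an},f^{-1}(R);\mathcal{L})$ for the right-hand side, and then feeds Theorem \ref{thmKohno} (applied to both $X$ and, via asymptotic genericity, to the slice $f^{-1}(R)\cap X$) into both sequences. Your write-up simply makes explicit the degree bookkeeping, the splitting, and the matching of truncated unbounded chambers with bounded chambers of the slice, all of which the paper leaves implicit.
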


\begin{proof}
By Lemma\ref{lem14} and Theorem\ref{comparison}, we have the following exact sequence:
$$\cdots\rightarrow\Homo_k(f^{-1}(R);\mathcal{L})\rightarrow\Homo_k(X^{an};\mathcal{L})\rightarrow\Homo_k^{r.d.}(X^{an};\mathcal{L})\rightarrow\cdots\;\; \text{(exact).}$$

We obtain the result in view of Theorem\ref{thmKohno} and the fact that $\Homo_k^{lf}(X^{an}\setminus f^{-1}(R),\mathcal{L})=\Homo_k^{lf}(X^{an},f^{-1}(R);\mathcal{L}).$
\end{proof}

Consider now the case when $f$ is a non-degenerate positive definite quadratic $f=\sum_{i=1}^nt_i^2.$ If we take a sufficiently large real number $R>0$, we can consider the ``asymptotic connection'' $(E,\nabla)|_{f^{-1}(R)\cap X}$. Now, we can compactify $f^{-1}(R)$ as $\bar{Y}_R=\{\sum_{i=1}^nt_i^2=t_0^2R\}\subset\mathbb{P}^n.$ Then, divisors $A_j$ induce divisors $\bar{A_j}$ on $\bar{Y}_R.$ We put $Y_R=\bar{Y}_R\setminus\cup_j\bar{A_j}.$ We consider a family $\bar{l}_j^c(t)=ca_{0j}t_0+a_{1j}t_1+\cdots +a_{nj}t_n$ where $0\leq c\leq 1.$ The corresponding hyperplane is denoted by $A_j^c=\{ \bar{l}^c_j(t)=0\}$, where we put $A_{N+1}^0=A_{N+1}.$ We say that the arrangement $\{ A_j^0\}_{j=1}^{N}$ in $\mathbb{A}^n$ is Boolean if any subset $\mathscr{B}\subset\{ A_j^0\}_{j=1}^{N}$ with cardinality $n$ satisfies $\cap\mathscr{B}=\{ 0\}.$

\begin{thm} 
Suppose that  $\{ A_j^0\}_{j=1}^{N}$ in $\mathbb{A}^n$ is Boolean, the divisor $\cup_{j=1}^{N+1}A_j\cap \bar{Y}_R$ in $\bar{Y}_R$ is normal crossing, $\bar{Y}_R$ is transversal to $\cup_{j=1}^{N+1}A^0_j$, and the divisor $\cup_{j=1}^NA^0_j\cap A_{N+1}\cap\bar{Y}_R$ in $A_{N+1}\cap\bar{Y}_R$ is normal crossing. 

If any eigenvalue of $P_j$ is not an integer, then, we have a canonical isomorphism
$$\Homo_p(Y^{an}_R,\mathcal{L})\simeq\Homo_p^{lf}(Y_R^{an},\mathcal{L})\text{ for all }p.$$

Furthermore, we have 
$$\Homo_n^{lf}(Y_R^{an}, \mathcal{L})=\bigoplus_{l}\mathbb{C}\bar{\Delta}_l\otimes\mathcal{L}_{\bar{x_l}}$$
and
$$\Homo_p(Y_R^{an}, \mathcal{L})=0\;(p\neq n).$$

Here, $\bar{\Delta_l}$ are bounded chambres of the real hypersphere arrangement $\{\mathbb{R}^n\cap Y_R\}.$ 
\end{thm}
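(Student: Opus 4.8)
The plan is to prove this statement as the spherical counterpart of Theorem~\ref{thmKohno}, following the same two-step scheme: a Kohno--Esnault--Schechtman--Viehweg type comparison for the first isomorphism, and a Morse-theoretic computation combined with the Gysin sequence for the chamber basis and the concentration in a single degree. The hypotheses are arranged precisely so that the pair consisting of the smooth projective quadric $\bar{Y}_R$ and the total boundary divisor $\bigcup_{j=1}^{N+1}\bar{A_j}\cap\bar{Y}_R$ is a good compactification: condition (3) makes $\bar{Y}_R$ smooth and transversal to the arrangement, while conditions (2) and (4) make the boundary a normal crossing divisor, both on $\bar{Y}_R$ and along the divisor at infinity $A_{N+1}\cap\bar{Y}_R$. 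First I would observe that the residue of $\nabla$ along the branch $\bar{A_j}\cap\bar{Y}_R$ is $P_j$ for $1\leq j\leq N$ and along $A_{N+1}\cap\bar{Y}_R$ is $P_{N+1}=-\sum_{j=1}^N P_j$; the non-integer eigenvalue hypothesis together with the normal crossing conditions then furnishes the non-resonance needed to transport the argument of \cite{Kohno} to $\bar{Y}_R$, giving $\Homo_p(Y_R^{an},\mathcal{L})\simeq\Homo_p^{lf}(Y_R^{an},\mathcal{L})$ for all $p$.

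For the chamber basis I would run Morse theory on the real hypersphere $f^{-1}(R)\cap\mathbb{R}^n$ exactly as in \cite{AK} Chapter~4. Restricting $F_\eta(t)=\sum_{j=1}^N\eta_j\log|l_j(t)|$ to this compact real hypersphere for generic $\eta\in\mathbb{R}_{>0}^N$, the function is Morse and tends to $-\infty$ along each real hyperplane, so each bounded chamber $\bar{\Delta}_l$ of the real hypersphere arrangement $\{\mathbb{R}^n\cap Y_R\}$ carries exactly one critical point of top index whose stable manifold is $\bar{\Delta}_l$. These produce the twisted cycles $\bar{\Delta}_l\otimes\mathcal{L}_{\bar{x}_l}$, and the gradient flow $\Phi_s$ of $F_\eta$ carries every point of $Y_R\setminus\bigcup_l\bar{\Delta}_l$ toward the boundary divisors, so that one obtains a surjection from the homology of a neighbourhood of infinity onto $\Homo_p^{lf}(Y_R\setminus\bigcup_l\bar{\Delta}_l,\mathcal{L})$, just as in the proof of Theorem~\ref{thmKohno}.

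It then remains to prove $\Homo_p^{lf}(Y_R\setminus\bigcup_l\bar{\Delta}_l,\mathcal{L})=0$, for which I would again use the local structure at infinity. The role of the deformation $\bar{l}_j^c$ is that, as $c\to 0$, the boundary $\bar{A_j}$ meets the infinity quadric $A_{N+1}\cap\bar{Y}_R$ in the trace of the central hyperplane $A_j^0$, so that a sufficiently small neighbourhood $W_x\cap Y_R$ of any point $x\in A_{N+1}\cap\bar{Y}_R$ is homotopy equivalent to the complement of a central arrangement, whose Euler characteristic vanishes by Theorem~3.3.7 of \cite{OT}. Since the twisted cohomology of a central arrangement complement is concentrated in the top degree, the Euler--Poincar\'e formula for local systems forces $R^p\iota_*\mathcal{L}=0$ for all $p$ near infinity, whence $\Homo_p^{lf}(Y_R\setminus\bigcup_l\bar{\Delta}_l,\mathcal{L})=0$. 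Feeding this into the Gysin sequence of the inclusion $\bigcup_l\bar{\Delta}_l\hookrightarrow Y_R$ yields both the concentration in a single degree and the asserted direct sum decomposition.

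I expect the main obstacle to be the verification, at the divisor at infinity, that $W_x\cap Y_R$ really is homotopic to a central arrangement complement with vanishing Euler characteristic. Unlike the flat case of Theorem~\ref{thmKohno}, here the local model lives on the curved quadric $\bar{Y}_R$, so one must combine the Boolean hypothesis on $\{A_j^0\}$, the transversality of $\bar{Y}_R$, and the normal crossing condition (4) through the family $\bar{l}_j^c$ in order to identify the local arrangement; controlling the interaction of the infinity quadric $A_{N+1}\cap\bar{Y}_R$ with the traces of the $A_j^0$ is the delicate point, and it is also what reduces the full genericity of Definition~2.1 to the single hypothesis that the eigenvalues of each $P_j$ are non-integers.
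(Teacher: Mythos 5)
Your overall scheme (regular--singular comparison, gradient-flow concentration onto chambers, vanishing at infinity, Gysin sequence) is the right one, and your first, third and fourth steps do match the paper's strategy. But there is a genuine gap at the Morse-theoretic step, and it sits exactly where the quadric case differs from Theorem~\ref{thmKohno}. You claim, citing \cite{AK} Chapter 4, that $F_\eta$ restricted to the hypersphere arrangement is Morse with exactly one critical point per bounded chamber, and that its gradient flow carries every point of $Y_R\setminus\bigcup_l\bar{\Delta}_l$ to the boundary. That last assertion is what the argument actually needs, and it is a statement about critical points of $F_\eta$ on the whole \emph{complex} manifold $Y_R^{an}$ --- equivalently, zeros of the restriction to the quadric of the logarithmic form $\sum_j\eta_j\,d\log l_j$ --- not only about real ones: a nonreal critical point lying in $Y_R^{an}\setminus\bigcup_l\bar{\Delta}_l$ would stop the flow and destroy the surjection from the homology of a neighbourhood of infinity. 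Morse theory on the compact real hypersphere cannot rule such points out, and \cite{AK} Chapter 4 does not apply here: the theorem that all critical points of the master function are real and lie one per bounded chamber is proved there for affine arrangements in general position, and there is no off-the-shelf analogue for an arrangement restricted to a quadric.

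This is precisely what the paper's proof supplies and your proposal omits. The paper uses the family $\bar{l}_j^c$ \emph{globally}, not merely near infinity: by Thom--Mather's first isotopy lemma applied to the stratified family over $\mathbb{A}^1_c$, it replaces $Y_R^{an}$ up to homotopy equivalence by the central model $\bar{Y}_R\setminus\bigcup_jA_j^0$, i.e.\ a central arrangement restricted to the Milnor fiber of the quadratic form. In that setting, Theorem 4.4, Proposition 5.2 and Corollary 5.13 of \cite{OTMil} give the upper bound $M$ for the total number of critical points of $F$ on the complex variety; since each of the $M$ bounded chambers contributes at least one critical point (the chamber is compact and $F\to-\infty$ at its boundary), the count forces exactly one critical point per chamber and, crucially, none at all --- real or complex --- outside $\bigcup_l\bar{\Delta}_l$. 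Only after this counting argument can one ``repeat the argument of Theorem~\ref{thmKohno}''. So the obstacle you flagged (identifying the local central model at infinity) is handled just as in Theorem~\ref{thmKohno}; the genuinely missing ingredient in your write-up is the centralization by deformation together with the Orlik--Terao critical point bound, without which your second step is an unproved assertion.
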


\begin{proof}
The first isomorphism follows as in \cite{Kohno} (note that we do not need to employ any blow-up because of our assumption). We notice that there is a homotopy equivalence between $Y_R^{an}$ and $\bar{Y}_R\setminus \cup_jA^0_j.$ This can be constructed by Thom-Mather's 1st isotopy lemma.
Namely, equipping $\bar{Y}_R\times\mathbb{A}^1_c$ with a canonical stratification coming from arrangements $\{ \overline{l}^c_j(t)=ca_{0j}t_0+a_{1j}t_1+\cdots a_{nj}t_n=0\}_{j=1}^N\cup \{ t_0=0\},$ one can confirm that the projection $\pi :\bar{Y}_R\times\mathbb{A}_c^1\rightarrow\mathbb{A}^1_c$ is a stratified submersion  at $c=0,1$ since $\bar{Y}_R$ is transversal to $\{ A_j\}_j$ and $\{ A^0_j\}_j$.
For notational simplicity, we denote $\bar{Y}_R\setminus \cup_jA^0_j$ by $Y^{an}.$

As in Theorem3.3, we consider a function
$$F=\sum_{i=1}^n\log|l_j|$$
on $Y^{an}.$ On each connected component of $Y^{an}\cap\mathbb{R}^n,$ $F$ has at least one critical point. Note that the number of bounded chambers is given by 
$$
M=
\left(
\begin{array}{c}
      N-1 \\
      n-1
    \end{array}
\right)
+\sum_{i=1}^n
\left(
\begin{array}{c}
      N \\
      n-i 
    \end{array}
\right).
$$

By Theorem4.4, Proposition5.2, and Corollary 5.13 of \cite{OTMil}, we can confirm that $F$ has at most M critical points on $Y^{an}.$ Thus, $F$ has no critical points on $Y^{an}\setminus \cup_l\bar{\Delta_l}$. We can now repeat the argument of Theorem 3.3.

\end{proof}

\begin{rem}
Suppose $n=2.$ Then, by a direct computation, one can confirm that the residue matrix at infinity of $\{ t_1^2+t_2^2=Rt_0^2\}$ is given by $P_{N+1}$. The general case is reduced to $n=2$ case by the procedure of slicing.
\end{rem}

We conclude this manuscript with a theorem which concerns the construction of solution basis of irregular Schl\"ofli type.

\begin{thm}\label{mainthm2}
Suppose the same assumption as Theorem 3.5 is satisfied for sufficiently large $R>0$. Then, we have an  isomorphism
$$\Homo_p^{r.d.}(X^{an},(\mathcal{O}_X,\nabla_f))\simeq\Homo_p^{lf}(X^{an}\setminus f^{-1}(R),\mathcal{L})$$
for any $p.$ In particular, 
$$\Homo_p^{r.d.}(X^{an},(\mathcal{O}_X,\nabla_f))=0 \;\; (p\neq n).$$
Furthermore, we have
$$\Homo_n^{lf}(X^{an}\setminus f^{-1}(R),\mathcal{L})=\bigoplus_k\Delta_k\otimes\mathcal{L}_{x_k}\oplus\bigoplus_l\tilde{\Delta}_l\otimes\mathcal{L}_{\tilde{x}_l},$$
where $\Delta_k$ are bounded chambres of arrangements $\{ A_j\}_j$ and $\tilde{\Delta_l}$ are the intersections of $\{ \Re(f)<R\}$ and unbounded chambres of the same arrangements.
\end{thm}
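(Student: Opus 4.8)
The plan is to follow the proof of Theorem~\ref{mainthm} essentially verbatim, replacing the hyperplane-slice input (Theorem~\ref{thmKohno}) by the hypersphere computation for the generic fiber of the quadratic phase $f=\sum_{i=1}^n t_i^2$. Since $f$ is non-constant, Lemma~\ref{lem14} furnishes a finite set $F\subset\mathbb{C}$ off which $f$ is a locally trivial fibration, so that for a fixed large real $R>0$ the fiber $f^{-1}(R)\cap X^{an}=Y_R^{an}$ is exactly the hypersphere arrangement complement treated above. Combining Lemma~\ref{lem14} with the comparison theorem (Theorem~\ref{comparison}) and the long exact sequence of Section~1 produces
$$\cdots\to\Homo_k(Y_R^{an};\mathcal{L})\to\Homo_k(X^{an};\mathcal{L})\to\Homo_k^{r.d.}(X^{an};\mathcal{L})\to\Homo_{k-1}(Y_R^{an};\mathcal{L})\to\cdots,$$
which is the long exact sequence of the pair $(X^{an},f^{-1}(R))$ once $\Homo_k^{r.d.}$ is identified with the relative group.

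Next I would compute the two outer terms. For $X$ itself the hypotheses guarantee that $(E,\nabla)$ is generic in the sense of the preceding definition (the non-integrality of the eigenvalues of the $P_j$ together with the Boolean and transversality conditions), so Theorem~\ref{thmKohno} gives $\Homo_k(X^{an};\mathcal{L})=0$ for $k\neq n$ and $\Homo_n(X^{an};\mathcal{L})=\bigoplus_k\mathbb{C}\Delta_k\otimes\mathcal{L}_{x_k}$. For the fiber $Y_R^{an}$, which has complex dimension $n-1$, the hypersphere arrangement theorem proved above shows that its homology is concentrated in the top degree $n-1$, with $\Homo_{n-1}(Y_R^{an};\mathcal{L})=\bigoplus_l\mathbb{C}\bar{\Delta}_l\otimes\mathcal{L}_{\bar{x}_l}$ indexed by the bounded chambers $\bar{\Delta}_l$ of the real hypersphere arrangement.

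Feeding these into the exact sequence, the two outer homologies sit in the consecutive degrees $n$ and $n-1$; hence for $k\neq n$ all surrounding terms vanish and $\Homo_k^{r.d.}(X^{an};\mathcal{L})=0$, while in degree $n$ one obtains the short exact sequence
$$0\to\Homo_n(X^{an};\mathcal{L})\to\Homo_n^{r.d.}(X^{an};\mathcal{L})\to\Homo_{n-1}(Y_R^{an};\mathcal{L})\to0,$$
which splits because all groups are $\mathbb{C}$-vector spaces. Using the identity $\Homo_n^{lf}(X^{an}\setminus f^{-1}(R),\mathcal{L})=\Homo_n^{lf}(X^{an},f^{-1}(R);\mathcal{L})$ from the proof of Theorem~\ref{mainthm}, the left summand becomes $\bigoplus_k\mathbb{C}\Delta_k\otimes\mathcal{L}_{x_k}$, and it remains to match the right summand with $\bigoplus_l\mathbb{C}\tilde{\Delta}_l\otimes\mathcal{L}_{\tilde{x}_l}$.

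The hard part will be this last matching. I must show that the image of $\Homo_{n-1}(Y_R^{an};\mathcal{L})$ inside $\Homo_n^{lf}(X^{an}\setminus f^{-1}(R),\mathcal{L})$ is spanned by the classes of the truncated unbounded chambers $\tilde{\Delta}_l=\tilde{\Delta}_l\cap\{\Re f<R\}$. Geometrically, on the real locus $\{\Re f<R\}\cap\mathbb{R}^n=\{\sum_{i=1}^n t_i^2<R\}$ is an open ball whose boundary sphere carries the hypersphere arrangement; each bounded sphere chamber $\bar{\Delta}_l$ is the outer cap of a unique unbounded real chamber of $\{A_j\}$, and coning it inward along the gradient flow of $\Re f$ produces the relative $n$-cycle $\tilde{\Delta}_l$. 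Verifying that this coning realizes the splitting of the short exact sequence above, and that the classes $\{\Delta_k\}\cup\{\tilde{\Delta}_l\}$ are linearly independent and exhaust $\Homo_n^{lf}$, is the substantive geometric content; a subsidiary point is to deduce the genericity of $(E,\nabla)$ on $X$ explicitly from the hypotheses of the hypersphere arrangement theorem, which is what licenses the use of Theorem~\ref{thmKohno}.
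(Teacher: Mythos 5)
Your overall strategy is exactly the one the paper intends: Theorem \ref{mainthm2} is stated with no separate proof precisely because it is meant to be proved by rerunning the proof of Theorem \ref{mainthm} --- Lemma \ref{lem14} together with Theorem \ref{comparison} produce the long exact sequence relating $\Homo_*(f^{-1}(R);\mathcal{L})$, $\Homo_*(X^{an};\mathcal{L})$ and $\Homo_*^{r.d.}$, and the fiber input Theorem \ref{thmKohno} is replaced by the hypersphere computation of Theorem 3.5 (whose homology is concentrated in degree $n-1$, as you correctly read it despite the paper's abuse of the letter $n$ there). Your degree bookkeeping, the resulting split short exact sequence in degree $n$, and the identification with $\Homo_n^{lf}(X^{an}\setminus f^{-1}(R),\mathcal{L})$ via $\Homo_n^{lf}(X^{an},f^{-1}(R);\mathcal{L})$ all coincide with the intended argument, and your level of detail on the chamber matching is no less than the paper's own.

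The one genuine gap is your claim that the hypotheses of Theorem 3.5 ``guarantee that $(E,\nabla)$ is generic'' in the sense of Definition 3.1. They do not. The Boolean condition constrains only the linear parts $\{A_j^0\}$, i.e.\ the intersection pattern at infinity, and the remaining hypotheses constrain the arrangement restricted to $\bar{Y}_R$ and to $A_{N+1}\cap\bar{Y}_R$; none of this forbids degenerate intersections in the finite part, which for large $R$ lie inside the ball $\{f<R\}$ and never meet $\bar{Y}_R$. For instance, with $n=2$, three affine lines whose linear parts are pairwise independent (so the central arrangement is Boolean) may still be concurrent at a finite point, giving a maximal subfamily with $\codim=2<q=3$; condition (2) of Definition 3.1 then demands that the eigenvalues of $P_{j_1}+P_{j_2}+P_{j_3}$ be non-integral, which is nowhere implied by the non-integrality of the eigenvalues of the individual $P_j$. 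So the ``subsidiary point'' you defer to the end cannot in fact be discharged: genericity of $(E,\nabla)$ must be carried as a standing hypothesis (exactly as Theorem \ref{mainthm} assumes ``generic and asymptotically generic''), and the paper's statement should be read that way. With that amendment --- assume genericity of $(E,\nabla)$ rather than attempt to deduce it --- your argument is the paper's argument.
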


We will conclude this paper with a more concrete description of the basis of rapid decay homologies under a certain situation. Let us take a closer look at Theorem \ref{thmKohno}. The isomorphism $$\Homo_p(X^{an},\mathcal{L})\tilde{\rightarrow}\Homo_p^{lf}(X^{an},\mathcal{L})\text{ for all }p$$ is naturally induced by the definition of locally finite homology. On the other hand, its inverse is, in general, difficult to describe. However, there is a concrete description of the inverse which is called ``regularization'' and was developed by K. Aomoto when the rank of the local system is 1. Let me briefly describe the method following \cite{G}. (see also Chapter 3, section 2 of \cite{AK}).

Hereafter, we assume that $\rank\mathcal{L}=1,$ and that the arrangement $\{ A_j\}_{j=1}^N$ is in general position in the sense of \cite{AK}, i. e., the arrangement $\{ a_{0j}t_0+a_{1j}t_1+\cdots +a_{nj}t_n=0\}_{j=0}^N$ in $\mathbb{A}^{n+1}$ is Boolean. Let $\Delta_k$ be a bounded chambre of the arrangement $\{ A_j\}_{j=1}^N.$ For simplicity, let us assume that $\Delta_k$ is surrounded by $A_1, \cdots, A_t\; (t\leq N)$. For any subset $J\subset\{ 1,\cdots ,t\},$ we put 
$$A_J=\cap_{j\in J}A_j,\;\Delta_J=\bar{\Delta}_k\cap A_J,\; T_J=\varepsilon\text{-neighbourhood of }\Delta_J.$$
($\varepsilon $ is supposed to be small enough.)  We also put
$$\sigma_k=\Delta_k\setminus\cup_{J}T_J$$
and equip with it a standard orientation coming from that of $\mathbb{R}^n.$
For any $j\in J,$ let $l_j$ be the $n-1$ face of $\sigma_k$ defined by $\sigma_k\cap\bar{T}_j.$ Let $S_j$ be a circle going around $A_j$ in a positive direction and whose starting point is $l_j$. Suppose the monodromy of $\mathcal{L}$ around $A_j$ is $\exp(2\pi\sqrt{-1}\alpha_j)\neq 1$ and we put $d_j=\exp(2\pi\sqrt{-1}\alpha_j)-1.$ Then, we put
$$\Delta_k^{reg}=\sigma_k+\displaystyle\sum_{\phi\neq J\subset\{ 1,\cdots ,t\}}\prod_{j\in J}\frac{1}{d_j}\Bigl( \bigl( \bigcap_{j\in J}l_j\bigr)\times \prod_{j\in J}S_j\Bigr).$$
Precisely speaking, we have to be more careful about its orientation. See \cite{AK}. 
It can readily be seen that $$[\Delta^{reg}_k]=[\Delta_k]\text{ in }\Homo_n^{lf}(X^{an},\mathcal{L})$$ where the bracket stands for the homology class in $\Homo_n^{lf}(X^{an},\mathcal{L}).$

This process is called the regularisation process. Regularization can be performed even for unbounded chambres $\tilde{\Delta}_l$ of Theorem \ref{mainthm}. It can be seen that the regularised cycle $\tilde{\Delta}^{reg}_l$ belongs to $\tilde{\Homo}^{r.d.}_n(X^{an},\mathcal{L})\simeq\Homo_n^{r.d.}(X^{an},\mathcal{L}).$ We obtain the following

\begin{crl}\label{cor}
Under the same assumption as Theorem \ref{mainthm}, if further $\{A_j\}_j$ is in general position and if $\rank\mathcal{L}=1$, we have the following decomposition of the rapid decay homology group.
$$\Homo_n^{r.d.}(X^{an}, \mathcal{L})=\bigoplus_{k}\mathbb{C}\Delta_k^{reg}\oplus\bigoplus_l\mathbb{C}\tilde{\Delta}_l^{reg}.$$ 
\end{crl}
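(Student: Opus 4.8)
The plan is to combine the three main ingredients that the paper has already assembled: Theorem~\ref{mainthm} (which identifies the rapid decay homology with a locally finite homology via the exact sequence of Lemma~\ref{lem14} and the comparison Theorem~\ref{comparison}), the explicit basis of locally finite cycles given by bounded and unbounded chambers, and the regularization procedure recalled in the paragraphs immediately preceding the corollary. Concretely, Theorem~\ref{mainthm} gives
$$\Homo_n^{r.d.}(X^{an},\mathcal{L})\simeq\Homo^{lf}_n(X^{an}\setminus f^{-1}(R),\mathcal{L})=\bigoplus_{k}\mathbb{C}\Delta_k\otimes\mathcal{L}_{x_k}\oplus\bigoplus_l\mathbb{C}\tilde{\Delta}_l\otimes\mathcal{L}_{\tilde{x}_l},$$
so the content of the corollary is purely to upgrade each locally finite basis element $\Delta_k$ (resp.\ $\tilde\Delta_l$) to the genuine rapid decay cycle $\Delta_k^{reg}$ (resp.\ $\tilde\Delta_l^{reg}$) produced by the regularization formula, and to check that these regularized cycles still form a basis.

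First I would treat the bounded chambers. For these the regularization $\Delta_k\mapsto\Delta_k^{reg}$ is exactly the classical construction of Aomoto (recalled here following \cite{G} and \cite{AK}), and the text has already noted the key identity $[\Delta_k^{reg}]=[\Delta_k]$ in $\Homo_n^{lf}(X^{an},\mathcal{L})$, the point being that adding the lower-dimensional ``collar'' terms weighted by $1/d_j$ does not change the locally finite homology class but produces a representative that is compactly supported (a genuine cycle rather than a merely locally finite one). Since the isomorphism of Theorem~\ref{thmKohno} sends these compactly supported cycles to the locally finite basis, the $\Delta_k^{reg}$ map to the summand $\bigoplus_k\mathbb{C}\Delta_k\otimes\mathcal{L}_{x_k}$ under the identification above. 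The assumption that $\rank\mathcal{L}=1$ and that $\{A_j\}_j$ is in general position is precisely what guarantees the monodromies $d_j=\exp(2\pi\sqrt{-1}\alpha_j)-1$ are nonzero (by genericity, condition (1) of the definition), so the formula is well defined.

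Next I would handle the unbounded chambers $\tilde\Delta_l$, which is where the rapid decay condition genuinely enters. Here the relevant boundary is not only the arrangement $\{A_j\}$ but also the slice $f^{-1}(R)$; the intersection of $\tilde\Delta_l$ with $\{\Re f<R\}$ is truncated, and regularization is performed along the faces lying on the hyperplanes $A_j$ exactly as in the bounded case, while the face lying on $\{\Re f=R\}$ is left as an actual boundary. The claim to verify is that the resulting chain $\tilde\Delta_l^{reg}$ is a rapid decay cycle, i.e.\ that it represents a class in $\tilde\Homo_n^{r.d.}(X^{an},\mathcal{L})\simeq\Homo_n^{r.d.}(X^{an},\mathcal{L})$. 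By the description $\tilde\Homo_*^{r.d.}=\Homo_*^{\Phi}$ with the support family $\Phi$ adapted to the rapid decay directions of $e^{-f}$, this amounts to observing that $\tilde\Delta_l^{reg}$, being supported in $\{\Re f<R\}$ and having its only non-regularized boundary on the slice, lies in $\Phi$ and has the correct behaviour toward infinity; the identification $\Homo_n^{lf}(X^{an}\setminus f^{-1}(R),\mathcal{L})=\Homo_n^{lf}(X^{an},f^{-1}(R);\mathcal{L})$ from the proof of Theorem~\ref{mainthm} makes $\tilde\Delta_l$ a relative locally finite cycle whose regularization is the rapid decay representative.

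The main obstacle, and the step I would spend the most care on, is the interaction of the regularization along the $A_j$-faces with the truncating slice $f^{-1}(R)$ for the unbounded chambers: one must check that the collar correction terms $\frac{1}{d_j}\bigl(\bigcap_{j\in J}l_j\bigr)\times\prod_{j\in J}S_j$ can be chosen so that they too remain inside $\{\Re f<R\}$ and do not create spurious boundary on the slice, and that orientations match under the comparison isomorphism. Once this compatibility is established, the two families $\{\Delta_k^{reg}\}_k$ and $\{\tilde\Delta_l^{reg}\}_l$ map, under the isomorphism of Theorem~\ref{mainthm}, to the given locally finite basis of $\Homo_n^{lf}(X^{an}\setminus f^{-1}(R),\mathcal{L})$; since that map is an isomorphism and the targets form a basis, the regularized cycles form the asserted direct sum decomposition of $\Homo_n^{r.d.}(X^{an},\mathcal{L})$, completing the proof.
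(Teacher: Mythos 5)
Your proposal is correct and follows essentially the same route as the paper, which gives no separate proof of Corollary~\ref{cor}: it simply combines the isomorphism and chamber basis of Theorem~\ref{mainthm} with the regularization procedure, noting $[\Delta_k^{reg}]=[\Delta_k]$ and that the regularized truncated unbounded chambers $\tilde{\Delta}_l^{reg}$ define classes in $\tilde{\Homo}^{r.d.}_n(X^{an},\mathcal{L})\simeq\Homo_n^{r.d.}(X^{an},\mathcal{L})$. In fact, the compatibility issue you flag (regularization along the $A_j$-faces interacting with the slice $f^{-1}(R)$) is precisely the point the paper leaves as ``it can be seen,'' so your write-up is, if anything, more explicit than the original.
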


\begin{rem}
We can obtain a similar result under the assumption of Theorem \ref{mainthm2}. The author hopes that Corollary \ref{cor} and its variant for Theorem \ref{mainthm2} will be the starting point of the global analysis of various special functions (see \cite{AK} and \cite{G}).

\end{rem}




\begin{thebibliography}{99}
%
%
\bibitem{AKOT}Aomoto, K., Kita, M., Orlik, P., Terao, H., Twisted de Rham cohomology groups of logarithmic forms, Adv. Math.  128  (1997),  no. 1, 119-152. 

\bibitem{AK}Aomoto, K., Kita, M., Theory of Hypergeometric Functions, Springer Monographs in Mathematics (2011).

\bibitem{Bredon}Bredon, G., E., Sheaf Theory (2nd edition), Graduate Texts in Mathematics, Springer (1997). 

\bibitem{Hand}R. J. Daverman, R. B. Sher, Handbook of Geometric topology, Elsevier (2001).

\bibitem{Del}Deligne, P., Equations diff\'erentielles \`a points singuliers r\'eguliers,
 Lecture Notes in Math. 163 Springer-Verlag (1970).

\bibitem{ET}Esterov, A., Takeuchi, K., Confluent A-hypergeometric functions and rapid decay homology cycles., Amer. J. Math., 137 (2015),  no. 2, 365-409.

\bibitem{G}Goto, Y., Twisted period relations for Lauricella's hypergeometric functions $F_A$., Osaka J. Math. 52 (2015), no. 3, 861-877.

\bibitem{hien} Hien, M., Periods for flat algebraic connections, \textit{Invent. Math.,} 178 (2009), 1–22.

\bibitem{hienroucairol} Hien, M., Roucairol, C., Integral representations for solutions of exponential Gauss-Manin systems, Bull. Soc. Math. France  136  (2008),  no. 4, 505-532. 


\bibitem{hud} Hudson, J. F. P. Piecewise linear topology. University of Chicago Lecture Notes prepared with the assistance of J. L. Shaneson and J. Lees W. A. Benjamin, Inc., New York-Amsterdam 1969

\bibitem{Joyce} Joyce, D., On manifolds with corners,  arXiv:0910.3518.

\bibitem{Kohno} Kohno, T., Homology of a local system on the complement of hyperplanes. Proc. Japan Acad. Ser. A Math. Sci.  62  (1986),  no. 4, 144-147.

\bibitem{Loj} Lojasiewicz, S., Triangulation of semi-analytic sets, Ann. Scuola Norm. Sup. Pisa (3)  18  (1964) 449-474.

\bibitem{OT} Orlik, P., Terao, H., Arrangements and Hypergeometric Integrals, MSJ Memoirs Vol 9, 2007

\bibitem{OTMil} Orlik, P., Terao, H., Arrangements and Milnor fibers, Math. Ann. 301, (1995), 211-235 

\bibitem{Pham} Pham, F., La descente des cols par les onglets de Lefschetz, avec vues sur Gauss-Manin, Asterisques 130, (1985) 11-47.

\bibitem{Quinn} Quinn, F., Homotopically stratified sets, Journal of the Amer. Math. Soc., Vol. 1

\bibitem{Sab} Sabbah, C., On the comparison theorem for elementary irregular $\mathcal{D}$-modules, Nagoya Math. J., Vol. 141 (1996), 107-124.

\bibitem{Sabook} Sabbah, C., Introduction to Stokes structures., Lecture Notes in Mathematics, 2060. Springer, Heidelberg, 2013.

\bibitem{V}Verdier, J.-L. Stratifications de Whitney et th\'eor\`eme de Bertini-Sard, (French), 
Invent. Math.  36  (1976), 295–312.
\end{thebibliography}
\end{document}